\documentclass[11pt,letterpaper]{article}
\usepackage[T1]{fontenc}
\usepackage{amsmath}
\usepackage{amsfonts}
\usepackage{amssymb}
\usepackage{amsthm}
\usepackage{graphicx}
\graphicspath{{img/}}
\usepackage{booktabs}
\usepackage{hyperref}
\usepackage{breakcites}
\usepackage{multicol}
\usepackage{bbm}
\usepackage[ruled,vlined]{algorithm2e}
\usepackage{adjustbox}
\usepackage{multirow}
\usepackage{subfig}
\usepackage{float}
\usepackage{pgfplots}
\usepackage{todonotes}
\usepackage{enumitem}
\usepackage{authblk}
\usepackage{natbib}
\hypersetup{breaklinks=true}

\newtheorem{theorem}{Theorem}
\newtheorem{remark}{Remark}
\newtheorem{corollary}{Corollary}

\newtheorem{proposition}{Proposition}

\newtheorem{example}{Example}
\newtheorem{lemma}{Lemma}
\pgfplotsset{compat=1.18}
\usepgfplotslibrary{groupplots}
\usetikzlibrary{calc}

\usepackage{xcolor}

\newcommand{\p}{\mathbb{P}}
\newcommand{\E}{\mathbb{E}}
\newcommand{\R}{\mathbb{R}}

\newcommand{\id}[1]{\mathbbm{1}_{\{#1\}}}
\newcommand{\bm}[1]{\boldsymbol{#1}}

\tikzstyle{arrow} = [thick,->,>=stealth]
\usetikzlibrary{trees,shapes,decorations, arrows}
\usetikzlibrary{shapes,positioning,fit}

\usepackage[margin=1.00in]{geometry}

\renewcommand{\d}{\mathop{} \mathrm{d}}

\title{A Laplace-based perspective on conditional mean risk sharing}

\author{Christopher Blier-Wong}
\affil{Department of Statistical Sciences, University of Toronto, Canada, \href{mailto:christopher.blierwong@utoronto.ca}{christopher.blierwong@utoronto.ca}}

\date{\today}

\begin{document}
\maketitle

\begin{abstract}
The conditional mean risk-sharing (CMRS) rule is an important tool for distributing aggregate losses across individual risks, but its implementation in continuous multivariate models typically requires complicated multidimensional integrals. We develop a framework to compute CMRS allocations from the joint Laplace--Stieltjes transform of the risk vector. The LSTs of the allocation measures $\nu_i(B)=\E[X_i\id{S\in B}]$ are expressed as partial derivatives of the joint LST evaluated on the diagonal $t_1=\cdots=t_n$. When densities exist, this yields one-dimensional Laplace inversions for $f_S$ and $\xi_i$, and hence $h_i(s)=\xi_i(s)/f_S(s)$ on the absolutely continuous part, providing closed-form or semi-analytic solutions for a broad class of distributions. We also develop numerical inversion methods for cases where analytic inversion is unavailable. We introduce an exponential tilting procedure to stabilize numerical inversion in low-probability aggregate events. We provide several examples to illustrate the approach, including in some high-dimensional settings where existing approaches are infeasible.
\end{abstract}

\bigskip
\noindent\textbf{Keywords}: Conditional mean risk sharing, Laplace-Stieltjes transforms, risk sharing, numerical inversion, exponential tilting, mixed distributions, dependence modelling. 

\section{Introduction}\label{sec:intro}

Risk pooling is fundamental in insurance mathematics. Classical actuarial models focus on the aggregate loss $S = \sum_{i=1}^n X_i$, since solvency depends primarily on the total loss. Risk management practice increasingly demands allocation methods that assign losses back to individual participants: regulatory frameworks such as Solvency II require capital to be allocated to specific business lines, and decentralized peer-to-peer insurance systems require equitable risk-sharing rules.

The conditional mean risk sharing (CMRS) rule, proposed in \cite{denuit2012convex}, assigns to a nonnegative vector of losses $(X_1,\dots,X_n)$ with $S=\sum_{j=1}^n X_j$ the allocation
$$h_i(s)=\E[X_i\mid S=s], \qquad i=1,\dots,n.$$
The CMRS rule has a strong theoretical justification: \cite{jiao2022axiomatic} show that four axioms (actuarial fairness, risk fairness, risk anonymity, and operational anonymity) uniquely characterize CMRS among all risk-sharing rules, motivating its use in decentralized applications such as peer-to-peer insurance and cryptocurrency mining pools where anonymity and transparency are important constraints. CMRS also has a powerful economic property, in that it universally improves each participant's risk position in the convex order \cite{denuit2012convex}. By Jensen's inequality for conditional expectations, each participant's allocated loss is less variable than retaining their own risk, so every risk-averse agent prefers the CMRS allocation to retaining their own risk, regardless of their specific utility function. This willingness-to-join property is essential for the practical viability of any voluntary risk pool, since participants need not share a common preference, yet all benefit from pooling. Moreover, under CMRS, enlarging the pool is beneficial: individual contributions become increasingly concentrated around the pure premium as the number of participants grows, providing a natural bridge between collaborative risk sharing and classical insurance pricing \cite{denuit2021risk}.

In the continuous case, a direct approach expresses the conditional mean as the ratio
$$h_i(s) = \frac{\int_0^s x f_{X_i, S_{-i}}(x, s-x) \d x}{f_S(s)},$$
where $f_{X_i, S_{-i}}$ is the joint density of $(X_i, S_{-i})$. 
Although CMRS is conceptually straightforward, evaluating $s\mapsto \E[X_i\mid S=s]$ is numerically challenging \cite{denuit2022risksharing,boonen2025serial}. In high dimensions, this requires knowledge of the marginal density of $S$ and the joint density of $(X_i, S_{-i})$, where $S_{-i} = \sum_{j \neq i} X_j$. Unless the losses are independent or have convenient marginals, these densities are typically unknown (in special cases, such as mixed Erlang distributions under an FGM copula, the integral admits closed-form expressions \cite{barges2009tvarbased,cossette2012tvarbased}). Numerical integration is subject to the curse of dimensionality, while Monte Carlo simulation is inefficient for conditioning on the null event $\{S=s\}$ for continuous losses, and heavy-tailed distributions introduce additional numerical instability.

The size-biased transformation assigns weights to outcomes proportional to their values \cite{arratia2019size}. For a nonnegative random variable $X$ with $\E[X]\in(0,\infty)$, the size-biased distribution of $X$, denoted $X^*$, is defined by the measure
$$\d F_{X^*}(x) = \frac{x \,\d F_X(x)}{\E[X]}, \quad x \ge 0,$$
assigning greater probability to larger values. In \cite{denuit2019sizebiased}, the CMRS rule is expressed in terms of size-biased distributions:
$$h_i(s)=\E[X_i]\frac{f_{S_i^*}(s)}{f_S(s)}$$
where $S_i^* = X_i^* + \sum_{j \neq i} X_j$. This representation simplifies the computation of CMRS allocations by reducing it to a convolution: when densities exist, $f_{S_i^*}$ can be obtained by convolving the size-biased distribution of $X_i$ with $S_{-i}$. Some authors have used this representation to obtain explicit CMRS formulas under various distributional assumptions, investigate large-loss asymptotics, and construct polynomial approximations \citep{denuit2020largeloss,denuit2022conditional, denuit2022polynomial}.

A related line of work studies weighted distributions in which premiums and allocations are expressed as expectations under tilted laws. In \cite{furman2008weighteda}, weighted premiums are defined by replacing $F$ with $\d F_w(x)\propto w(x)\d F(x)$, which yields functionals of the form $\E[Xw(X)]/\E[w(X)]$ and recovers net, Esscher-type, and tail-based principles through different weight functions. In \cite{furman2008weighted}, the same perspective is extended to capital allocation through bivariate weighted functionals $\E[X w(Y)]/\E[w(Y)]$; \cite{furman2009weighted} then presents weighted distributions as a unifying mechanism for actuarial and economic pricing principles, and explores their relation to distortion-based constructions. In \cite{furman2021discussion}, the authors place size-biasing explicitly within this framework and emphasize univariate transform identities similar to the ones used in the present paper, though they do not consider the multivariate setting or the connection to CMRS.

Related computational methods have been developed for CMRS in the discrete setting. In \cite{blier-wong2025efficient}, the authors consider portfolios with lattice-supported components and study efficient evaluation of conditional means through generating functions: they work with the expected allocation $\E[X_i\id{S=s}]$ and relate ordinary generating functions to the multivariate probability generating function. The main advantage of the transform-based approach is that, on the transform scale, convolutions become products, which do not involve multidimensional integration and are therefore more amenable to numerical evaluation, especially in high dimensions. Some of the examples we consider in Section \ref{subsec:numerical-implementation} scale to dimensions as high as $n=100\;000$ participants, whereas the exact approach (which is available in closed form) scales only to $n=4$. The transform approach yields tractable algorithms, including recursive identities and FFT-based methods, for extracting the expected allocations. Their setup accommodates compound distributions, joint-transform dependence structures, and heavy-tailed marginals, with applications to peer-to-peer risk sharing and Euler-type capital allocation. The objective of this paper is to develop a transform-based framework for computing CMRS allocations in the continuous setting (although the main result applies to general nonnegative random variables, including discrete and mixed cases), where probability generating function methods no longer apply. We study the Laplace--Stieltjes transform of $(X_1,\ldots,X_n)$, which serves as the analogue of the probability generating function for mixed distributions, and obtain a transform-based representation of $\E[X_i\id{S=s}]$ which simplifies the computation of CMRS allocations. Building on this representation, we derive closed-form or semi-analytic CMRS allocations across several tractable model classes, develop numerical inversion methods, and show that exponential tilting improves stability for rare-event allocations.

Transform-based approaches have also been developed for related conditional expectations. For example, \cite{fonseca2026wisharta} considers loss models based on Wishart distributions and derives Fourier-based formulas for conditional tail functionals of the form $\E[X_i^q \mid S>s^*]$, expressed as one-dimensional integrals involving derivatives of moment-generating functions, reducing multivariate conditional quantities to low-dimensional transform computations through joint transform differentiation. 

The size-biased representation requires finite means, which is not necessary for CMRS itself: the allocation measures are well-defined without moment assumptions. It follows that, while our transform-based representation of CMRS is closely related to size-biased transforms, it admits an infinite mean, since the Laplace transform of the allocation measures is well-defined for each $t>0$ without requiring moments. That said, infinite-mean models are rarely practically relevant for risk sharing: under heavy-tailed Pareto losses, additional diversification can be welfare-reducing in first-order stochastic dominance \citep{chen2025diversification,chen2025infinitemean}, so we focus on the finite-mean setting in the applications.

The remainder of this paper is structured as follows. Section \ref{sec:main-result} states and proves the LST characterization of CMRS. In Section \ref{sec:closed-form}, we develop expressions for some closed-form cases, including for frailty dependence constructions, exponential dispersion family margins, and matrix-exponential distributions. Section \ref{sec:numerical} discusses numerical inversion methods. In Section \ref{sec:tilting}, we explain how to use exponential tilting to stabilize the computation of conditional means under rare events and provide numerical examples. Section \ref{sec:conclusion} concludes, and Appendix \ref{sec:proofs} contains the proofs of the main results.

\section{Transform-based CMRS representation}\label{sec:main-result}

\subsection{Conditional mean risk sharing and allocation measures}

Let $(\Omega,\mathcal F,\p)$ be a probability space and let $X_1,\dots,X_n:\Omega\to[0,\infty)$ be nonnegative random variables.
Set $S=\sum_{j=1}^n X_j$ and let $\mu_S$ denote its law. The CMRS rule assigns to component $i$ the allocation
\begin{equation}\label{eq:cmrs-def-intro}
h_i(s):=\E[X_i\mid S=s],\qquad s\ge 0.
\end{equation}
The vector $\bm h(s)$ satisfies the budget-balance condition $\sum_{i=1}^n h_i(s)=s$ for $\mu_S$-almost every $s$, also called the full allocation property in \cite{mcneil2015quantitative}. The CMRS rule coincides with the \textit{ex post} expected contribution of each participant at a fixed aggregate level \citep{denuit2012convex}.

In the discrete case, the conditional mean is well-defined at each point in the support of $S$, and the allocation can be expressed as the ratio $h_i(s)=\E[X_i\id{S=s}]/\p(S=s)$. In the continuous case, the conditional mean is defined as a Radon--Nikodym derivative, and it is preferable to work with the allocation measures
\begin{equation}\label{eq:alloc-measures-intro}
\nu_i(B):=\E[X_i\id{S\in B}],\qquad B\in\mathcal B([0,\infty)),
\end{equation}
and with their sum
\begin{equation}\label{eq:alloc-measures-S}
\nu(B):=\E[S\id{S\in B}]=\sum_{i=1}^n \nu_i(B),
\end{equation}
where $\mathcal B([0,\infty))$ is the Borel $\sigma$-algebra on $[0,\infty)$. The measure $\nu_i$ captures the expected contribution of component $i$ to the aggregate loss at different levels of $S$, and CMRS is then defined as $h_i(s)=\frac{\d\nu_i}{\d\mu_S}(s)$ for $\mu_S$-almost all $s$; we formalise this in Theorem \ref{thm:main-theorem}.

The joint Laplace--Stieltjes transform (LST) of $\bm X=(X_1,\dots,X_n)$ is
\begin{equation}\label{eq:joint-lst}
\mathcal L_{\bm X}(t_1,\dots,t_n):=\E\left[e^{-\sum_{j=1}^n t_j X_j}\right],
\qquad (t_1,\dots,t_n)\in[0,\infty)^n,
\end{equation}
and $\mathcal L_S(t):=\E[e^{-tS}]$. Since $0\le X_i\le S$, for each $s\ge 0$,
$$\nu_i([0,s])=\E\left[X_i\id{S\le s}\right]\le \E\left[S\id{S\le s}\right]\le s<\infty,$$
so $\nu_i$ is $\sigma$-finite. When $S$ admits a density $f_S$, Theorem \ref{thm:main-theorem} shows that $\nu_i$ is absolutely continuous with Lebesgue density $\xi_i$, that is $\nu_i(B)=\int_B \xi_i(s)\d s$. We use LST terminology at the measure level and reserve ``Laplace transform'' for the absolutely continuous density case.

\subsection{A Laplace representation of CMRS}

We now state the main result of this paper, which characterizes CMRS allocations in terms of the joint LST of the random vector $\bm X$ and the LSTs of the allocation measures $\nu_i$.

\begin{theorem}\label{thm:main-theorem}
Let $X_1,\dots,X_n$ be nonnegative random variables, $S=\sum_{j=1}^n X_j$, $\mu_S$ the law of $S$, and $\nu_i$, $\mathcal L_{\bm X}$ as defined in \eqref{eq:alloc-measures-intro}--\eqref{eq:joint-lst}. Then, the following hold: 
\begin{enumerate}[label=\textnormal{(\roman*)}, leftmargin=3.2em]
\item For any $(t_1,\dots,t_n)\in[0,\infty)^n$ with $t_i>0$, the partial derivative $\frac{\partial}{\partial t_i}\mathcal L_{\bm X}(t_1,\dots,t_n)$ exists as a two-sided derivative and satisfies
\begin{equation}\label{eq:partial-derivative}
\frac{\partial}{\partial t_i}\mathcal L_{\bm X}(t_1,\dots,t_n)
=-\E\left[X_ie^{-\sum_{j=1}^n t_j X_j}\right].
\end{equation}
If $t_i=0$, the right derivative exists (possibly $-\infty$). In particular, along the diagonal $t_1=\cdots=t_n=t$, define
\begin{equation*}
\mathcal L_i^*(t):=-\left.\frac{\partial}{\partial t_i}\mathcal L_{\bm X}(t_1,\dots,t_n)\right|_{t_1=\cdots=t_n=t},
\qquad t\ge 0,
\end{equation*}
so that $\mathcal L_i^*(0)=\E[X_i]$ (possibly $+\infty$) and
\begin{equation}\label{eq:Li-star-expectation}
\mathcal L_i^*(t)=\E\left[X_ie^{-tS}\right],\qquad t>0.
\end{equation}
\item Let $m_i:=\frac{\d\nu_i}{\d\mu_S}$ be the Radon--Nikodym derivative of $\nu_i$ with respect to $\mu_S$. Then $m_i(S)$ coincides almost surely with $\E[X_i\mid S]$. Also, $\E[X_i\mid S=s]=m_i(s)$ for $\mu_S$-almost every $s$.

\item If $S$ admits a density $f_S$ with respect to Lebesgue measure and $\xi_i$ denotes the (Lebesgue) density of $\nu_i$, then for almost every $s>0$ (with respect to Lebesgue measure),
\begin{equation}\label{eq:xi-factorization}
\xi_i(s)=\E[X_i\mid S=s]f_S(s),
\end{equation}
and the derivative $\mathcal L_i^*$ evaluated at $t_1=\cdots=t_n$ is the Laplace transform of $\xi_i$:
\begin{equation}\label{eq:Li-star-laplace}
\mathcal L_i^*(t)=\int_0^\infty e^{-ts}\xi_i(s)\d s,\qquad t>0.
\end{equation}
\end{enumerate}
\end{theorem}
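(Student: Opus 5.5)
We prove the three parts in order. Each uses a single standard tool: differentiation under the expectation for (i), the defining property of conditional expectation for (ii), and Fubini/Tonelli for (iii).

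\textbf{Part (i).} Fix $t_j\ge 0$ for $j\neq i$ and write $\phi(u)=\E[e^{-uX_i}Z]$ with $Z=e^{-\sum_{j\neq i}t_jX_j}\in(0,1]$.

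For $t_i>0$, choose $0<\delta<t_i$. For $|h|<\delta$ the mean value theorem gives
$$\frac{|e^{-(t_i+h)X_i}-e^{-t_iX_i}|}{|h|}\le X_ie^{-(t_i-\delta)X_i}\le \frac{1}{e(t_i-\delta)}.$$
This bound is a constant and hence integrable. Dominated convergence then yields the two-sided derivative \eqref{eq:partial-derivative}, and no moment assumption is needed.

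For $t_i=0$, the difference quotient $(e^{-hX_i}-1)/h$ is nonpositive and decreases monotonically to $-X_i$ as $h\downarrow 0$, because $x\mapsto(1-e^{-hx})/h$ increases as $h$ decreases. Monotone convergence therefore gives the right derivative $-\E[X_iZ]$, possibly $-\infty$.

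Setting $t_1=\cdots=t_n=t$ gives \eqref{eq:Li-star-expectation} and $\mathcal L_i^*(0)=\E[X_i]$.

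\textbf{Part (ii).} The absolute continuity $\nu_i\ll\mu_S$ holds because $\mu_S(B)=0$ implies $X_i\id{S\in B}=0$ almost surely. Both measures are $\sigma$-finite, since $\nu_i([0,s])\le s$. Radon--Nikodym therefore gives a measurable $m_i\ge 0$ with
$$\E[X_i\id{S\in B}]=\int_B m_i\d\mu_S=\E[m_i(S)\id{S\in B}]\quad\text{for all Borel } B.$$
This is exactly the defining property of $\E[X_i\mid S]$, where for nonnegative, possibly non-integrable $X_i$ we use the generalized conditional expectation. Every $\sigma(S)$-measurable set has the form $\{S\in B\}$, and $m_i(S)$ is $\sigma(S)$-measurable. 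Uniqueness (almost sure, via $\sigma$-finiteness) then gives $m_i(S)=\E[X_i\mid S]$ almost surely. The statement for $\mu_S$-almost every $s$ is the Doob--Dynkin factorization: any two versions $g(S)$ agree almost surely, hence $g=m_i$ holds $\mu_S$-almost everywhere.

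\textbf{Part (iii).} If $\mu_S(\d s)=f_S(s)\d s$, then $\nu_i(\d s)=m_i(s)f_S(s)\d s$. Hence $\nu_i$ is absolutely continuous with density $\xi_i=m_if_S$. Densities are unique Lebesgue-almost everywhere, so \eqref{eq:xi-factorization} follows with part (ii).

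For the transform, define $\nu_i$ through its integrals:
$$\int g\d\nu_i=\E[X_ig(S)]\quad\text{for nonnegative Borel } g.$$
This holds for indicators by definition, extends to simple functions by linearity, and extends to nonnegative $g$ by monotone convergence. Taking $g(s)=e^{-ts}$ and using part (i) gives
$$\int_0^\infty e^{-ts}\xi_i(s)\d s=\E[X_ie^{-tS}]=\mathcal L_i^*(t),$$
which is \eqref{eq:Li-star-laplace}.

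\textbf{Main obstacle.} The only delicate point is the absence of moment assumptions. In (i) it is handled by the bound $xe^{-cx}\le 1/(ec)$ away from $t_i=0$ and by monotone convergence at $0$. In (ii) it requires working with $\sigma$-finite measures and generalized conditional expectations instead of $L^1$ theory.
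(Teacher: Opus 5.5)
Your proposal is correct and follows the paper's proof step for step: (i) uses dominated convergence with the bound $xe^{-cx}\le 1/(ec)$ for $t_i>0$ and monotone convergence at $t_i=0$; (ii) identifies the Radon--Nikodym derivative of $\nu_i\ll\mu_S$ with $\E[X_i\mid S]$ via the defining property; and (iii) combines $\xi_i=m_if_S$ with \eqref{eq:Li-star-expectation}. The differences are cosmetic: you use the mean value theorem instead of the integral representation of the difference quotient, and you spell out the extension from indicators to nonnegative $g$, which the paper leaves implicit.
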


In the independent case, the joint transform factorises and the partial derivatives at equal arguments admit convenient product formulas, as the following corollary shows.
\begin{corollary}\label{cor:independent}
If $X_1,\dots,X_n$ are independent, then for each $i$ and $t>0$,
\begin{equation}\label{eq:Li-star-independent}
\mathcal L_i^*(t)=\left(-\frac{\d}{\d t}\mathcal L_{X_i}(t)\right)\prod_{j\neq i}\mathcal L_{X_j}(t),
\qquad
\mathcal L_S(t)=\prod_{j=1}^n \mathcal L_{X_j}(t).
\end{equation}
\end{corollary}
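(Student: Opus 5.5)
The statement to prove is Corollary \ref{cor:independent}, which follows from Theorem \ref{thm:main-theorem}(i) once the joint transform is factorised.

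First, independence gives the factorisation of the joint transform. Since $X_1,\dots,X_n$ are independent and the maps $x\mapsto e^{-t_jx}$ are bounded and measurable, we have
\begin{equation*}
\mathcal L_{\bm X}(t_1,\dots,t_n)=\E\Big[\prod_{j=1}^n e^{-t_jX_j}\Big]=\prod_{j=1}^n \mathcal L_{X_j}(t_j),\qquad (t_1,\dots,t_n)\in[0,\infty)^n .
\end{equation*}
Setting $t_1=\cdots=t_n=t$ gives $\mathcal L_S(t)=\E[e^{-tS}]=\prod_j\mathcal L_{X_j}(t)$, which is the second identity in \eqref{eq:Li-star-independent}.

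Next, differentiate in $t_i$ at a point with $t_i=t>0$. Only the factor $\mathcal L_{X_i}(t_i)$ depends on $t_i$, and the remaining factors are finite constants in $[0,1]$. It therefore suffices to know that the univariate map $u\mapsto\mathcal L_{X_i}(u)$ is differentiable at $u=t>0$ with derivative $-\E[X_ie^{-tX_i}]$. This is the case $n=1$ of Theorem \ref{thm:main-theorem}(i), so it may be invoked directly. For a self-contained argument, use dominated convergence. The difference quotients $(e^{-(t+h)x}-e^{-tx})/h$, for $|h|<t/2$, are bounded by $x e^{-tx/2}\le 2/(et)$, which is integrable. Hence
\begin{equation*}
\frac{\partial}{\partial t_i}\mathcal L_{\bm X}(t_1,\dots,t_n)=\mathcal L_{X_i}'(t_i)\prod_{j\ne i}\mathcal L_{X_j}(t_j).
\end{equation*}
Evaluating on the diagonal and negating, using the definition of $\mathcal L_i^*$, gives the first identity in \eqref{eq:Li-star-independent}.

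As a consistency check, the same result follows from \eqref{eq:Li-star-expectation}. Write $\E[X_ie^{-tS}]=\E[X_ie^{-tX_i}]\,\E[e^{-t\sum_{j\ne i}X_j}]$, using independence of $X_i$ from $(X_j)_{j\ne i}$. This is legitimate because both factors are finite, since $0\le xe^{-tx}\le 1/(et)$. The only step needing any care is justifying differentiation under the expectation, and Theorem \ref{thm:main-theorem}(i) already provides it. There is no real obstacle.
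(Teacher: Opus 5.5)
Your proof is correct and follows the route the paper indicates (it gives no separate appendix proof, only the remark that the joint transform factorises): factorise $\mathcal L_{\bm X}$ by independence, read off $\mathcal L_S$ on the diagonal, and differentiate the single factor $\mathcal L_{X_i}(t_i)$ using Theorem~\ref{thm:main-theorem}(i). Your consistency check via $\E[X_ie^{-tX_i}]\,\E[e^{-tS_{-i}}]$ is the same computation the paper uses in Example~\ref{ex:me-two}.
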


When the marginal densities exist, these transform identities translate into convolution formulas in the original domain.
\begin{corollary}\label{cor:convolution-independent}
Assume $X_1,\dots,X_n$ are independent, and the densities $f_{X_i}$ and $f_{S_{-i}}$ exist, where $S_{-i}:=\sum_{j\ne i} X_j$. Then
$$f_S = f_{X_i}*f_{S_{-i}}, \qquad \xi_i = (x f_{X_i}(x))*f_{S_{-i}}.$$
\end{corollary}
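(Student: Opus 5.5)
The plan is to combine Corollary \ref{cor:independent} with Theorem \ref{thm:main-theorem}(iii), and then use uniqueness of the Laplace transform to identify the two densities. Throughout, write $g_i(x):=x f_{X_i}(x)$.

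\textbf{Step 1: the density of $S$.} By independence, $S=X_i+S_{-i}$ with $X_i$ independent of $S_{-i}$. The standard convolution theorem for sums of independent absolutely continuous variables then gives $f_S=f_{X_i}*f_{S_{-i}}$ directly. On the transform side, this is consistent with $\mathcal L_S(t)=\mathcal L_{X_i}(t)\mathcal L_{S_{-i}}(t)$ and $\mathcal L_{S_{-i}}=\prod_{j\ne i}\mathcal L_{X_j}$. In particular, $S$ has a density, so Theorem \ref{thm:main-theorem}(iii) applies and $\nu_i$ has a Lebesgue density $\xi_i$.

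\textbf{Step 2: the Laplace transform of $g_i$.} By Theorem \ref{thm:main-theorem}(i) applied with $n=1$,
\[
-\frac{\d}{\d t}\mathcal L_{X_i}(t)=\E[X_ie^{-tX_i}]=\int_0^\infty e^{-tx}\,x f_{X_i}(x)\d x, \qquad t>0.
\]
This is the Laplace transform of $g_i$, which is a nonnegative function with $\int_0^\infty e^{-tx}g_i(x)\d x<\infty$ for every $t>0$.

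\textbf{Step 3: the Laplace transform of $g_i*f_{S_{-i}}$.} Both $g_i$ and $f_{S_{-i}}$ are nonnegative, so Tonelli's theorem shows that
\[
\int_0^\infty e^{-ts}(g_i*f_{S_{-i}})(s)\d s=\left(-\frac{\d}{\d t}\mathcal L_{X_i}(t)\right)\mathcal L_{S_{-i}}(t).
\]
This holds without any integrability assumption on $g_i$. By Corollary \ref{cor:independent}, the right-hand side equals $\mathcal L_i^*(t)$. By \eqref{eq:Li-star-laplace}, $\mathcal L_i^*(t)$ is also the Laplace transform of $\xi_i$.

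\textbf{Step 4: identification (the delicate step).} We now have two nonnegative measurable functions, $\xi_i$ and $g_i*f_{S_{-i}}$, whose Laplace transforms agree and are finite for all $t>0$. We want to conclude that they agree almost everywhere. Fix any $t_0>0$ and multiply both by $e^{-t_0 s}$. This turns them into finite measures whose Laplace transforms agree on $[0,\infty)$. Uniqueness of the LST for finite measures on $[0,\infty)$ then gives equality of the measures, hence $\xi_i=g_i*f_{S_{-i}}$ almost everywhere. This is the sense in which the convolution identity holds, since densities are only defined up to null sets.

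\textbf{Direct check.} As a cross-check that avoids transforms, one can also compute $\nu_i(B)$ directly from independence:
\[
\nu_i(B)=\iint x\id{x+y\in B}f_{X_i}(x)f_{S_{-i}}(y)\d x\d y=\int_B (g_i*f_{S_{-i}})(s)\d s,
\]
using Tonelli's theorem and the substitution $s=x+y$. This gives the second identity immediately. I would include this as the main argument, with the transform computation as the route that connects it to Corollary \ref{cor:independent}.
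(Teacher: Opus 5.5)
Your proposal is correct. The argument you designate as the main one is exactly the paper's proof: the standard convolution formula for $f_S$, plus the ``direct check'' that writes $\nu_i(B)$ as a double integral against $f_{X_i}(x)f_{S_{-i}}(y)$ and substitutes $s=x+y$ via Tonelli. The transform route in Steps 2--4 is a valid but redundant alternative that obtains the same identity from Corollary~\ref{cor:independent} and Theorem~\ref{thm:main-theorem}(iii) by Laplace uniqueness, which you justify correctly by damping with $e^{-t_0 s}$ to reduce to finite measures.
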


\begin{remark}
Theorem \ref{thm:main-theorem} provides the transform-domain analogue of the size-biased identity in \cite{arratia2019size, denuit2019sizebiased}. Since multiplying a density by $x$ is equivalent to differentiating its Laplace transform, we have
$$\mathcal{L}_{X^*}(t) = \frac{-\frac{\d}{\d t}\mathcal{L}_X(t)}{\E[X]},$$
and the allocation transform $\mathcal{L}_i^*(t) = -\frac{\partial}{\partial t_i} \mathcal{L}_{\mathbf{X}}(t, \dots, t)$ is the unnormalized size-biased transform. 
\end{remark}

\subsection{Dealing with atoms in numerical inversion}\label{subsec:atoms}

The Radon--Nikodym formulation holds for mixed laws that may contain atoms (for example, Tweedie compound Poisson models which have a mass at $0$), but in the presence of atoms, the density-level representation of CMRS breaks down at the atom locations. In particular, for $s_0$ such that $\mu_S(\{s_0\})>0$, the CMRS at the atom is
$$\E[X_i\mid S=s_0]=\frac{\nu_i(\{s_0\})}{\mu_S(\{s_0\})}.$$
When $\mu_S$ or $\nu_i$ has atoms, the transforms $\mathcal{L}_S$ and $\mathcal{L}_i^*$ contain atomic contributions whose Laplace inversion yields Dirac masses rather than densities. The atomic part must therefore be extracted and handled separately before applying a density inversion rule.

Suppose $\mu_S$ has a finite set of atoms $\{s_j\}_{j\in J}$. One can write
$$\mathcal L_S(t)=\sum_{j\in J}\mu_S(\{s_j\})e^{-ts_j}+\int_{(0,\infty)}e^{-ts}f_S(s)\d s, \qquad \mathcal L_i^*(t)=\sum_{j\in J}\nu_i(\{s_j\})e^{-ts_j}+\int_{(0,\infty)}e^{-ts}\xi_i(s)\d s,$$
and define the atomic transform components
$$A_S(t):=\sum_{j\in J}\mu_S(\{s_j\})e^{-ts_j}, \qquad A_i(t):=\sum_{j\in J}\nu_i(\{s_j\})e^{-ts_j}.$$
One then applies numerical inversion only to the continuous remainders $t\mapsto \mathcal L_S(t)-A_S(t)$ and $t\mapsto \mathcal L_i^*(t)-A_i(t)$.
The CMRS ratio $h_i(s)=\xi_i(s)/f_S(s)$ is then computed for $s>0$ on the continuous part where $f_S(s)>0$, while at each atom one uses $h_i(s_j)=\nu_i(\{s_j\})/\mu_S(\{s_j\})$.

On the absolutely continuous part of $\mu_S$, CMRS is the ratio
$$\E[X_i\mid S=s]=\frac{\xi_i(s)}{f_S(s)},$$
where $f_S$ and $\xi_i$ are obtained by inverting the absolutely continuous remainders of $\mathcal L_S$ and $\mathcal L_i^*$ after subtracting atomic and singular terms. At atoms, one uses $\E[X_i\mid S=s_0]=\nu_i(\{s_0\})/\mu_S(\{s_0\})$ as the CMRS allocation.

For most insurance applications, the only atom is at $0$, corresponding to the event of no loss. In this case, one extracts the mass at $0$ from $\mathcal L_S$ and $\mathcal L_i^*$, applies numerical inversion to the continuous remainders, and then uses the mass at $0$ to compute the CMRS at $0$ (which is trivially zero since no loss implies no contribution from any component).

The point masses $\mu_S(\{s_j\})$ and $\nu_i(\{s_j\})$ entering $A_S$ and $A_i$ can be computed by discrete FFT or probability generating function methods \citep{blier-wong2025efficient}. 
In this framework, we assume that the locations of atoms are known a priori, which is typically the case in insurance applications, where the only atom is at $0$. This separation, therefore, allows FFT-based methods to handle the atomic part while LST inversion (Section~\ref{sec:numerical}) handles the continuous part.

\subsection{Allocation proportions and transform-level diagnostics}\label{subsec:diagnostics}

The following lemma gives a chain rule for the Radon--Nikodym derivatives of $\nu_i$ with respect to $\nu$ and $\mu_S$.
\begin{lemma}\label{lem:absolute-continuity}
For each $i\in\{1,\dots,n\}$, we have $\nu_i\ll \mu_S, \nu\ll \mu_S$, and $\nu_i\ll \nu.$ Moreover, for $\mu_S$-almost every $s>0$,
\begin{equation}\label{eq:rn-chain}
\frac{\d\nu_i}{\d\nu}(s)
=\frac{\E[X_i\mid S=s]}{s}.
\end{equation}
\end{lemma}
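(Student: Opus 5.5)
The approach is to write everything as integrals against $\mu_S$ using conditional expectations, and then read off the Radon--Nikodym derivatives. By Theorem \ref{thm:main-theorem}(ii), $\nu_i$ is $\sigma$-finite and has density $m_i$ with respect to $\mu_S$, with $m_i(S)=\E[X_i\mid S]$ almost surely.

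\textbf{Step 1: $\nu_i\ll\mu_S$ and $\nu\ll\mu_S$.} Fix $B$ with $\mu_S(B)=0$. Then $\id{S\in B}=0$ almost surely, so $\nu_i(B)=\E[X_i\id{S\in B}]=0$. The same argument with $X_i$ replaced by $S$, or simply summing over $i$, gives $\nu(B)=0$. Conditioning on $S$ also gives the explicit density
$$\nu(B)=\E[S\id{S\in B}]=\int_B s\,\mu_S(\d s),$$
so $\frac{\d\nu}{\d\mu_S}(s)=s$.

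\textbf{Step 2: $\nu_i\ll\nu$.} Fix $B$ with $\nu(B)=0$, so that $\E[S\id{S\in B}]=0$. Since $S\ge 0$, this forces $S\id{S\in B}=0$ almost surely, i.e.\ $S=0$ almost surely on $\{S\in B\}$. On that event $0\le X_i\le S=0$, hence $X_i\id{S\in B}=0$ almost surely and $\nu_i(B)=0$.

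Note that $\nu(B)=0$ does not force $\mu_S(B)=0$ when $0\in B$ is an atom of $S$. This is harmless, because $\nu_i(\{0\})=0$ as well, which is exactly why the lemma is stated only for $s>0$.

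\textbf{Step 3: the chain rule.} All three measures are $\sigma$-finite: $\nu([0,s])\le s$, as noted before the theorem. The standard chain rule for Radon--Nikodym derivatives then gives
$$m_i=\frac{\d\nu_i}{\d\mu_S}=\frac{\d\nu_i}{\d\nu}\cdot\frac{\d\nu}{\d\mu_S}=\frac{\d\nu_i}{\d\nu}(s)\,s\qquad \mu_S\text{-a.e.}$$
This chain rule needs $\nu_i\ll\nu\ll\mu_S$, which Steps 1 and 2 provide. Dividing by $s$ on $\{s>0\}$ and invoking Theorem \ref{thm:main-theorem}(ii) to identify $m_i(s)$ with $\E[X_i\mid S=s]$ yields \eqref{eq:rn-chain}.

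A direct check is also possible without the abstract chain rule. Setting $g(s)=m_i(s)/s$ for $s>0$ and $g(0)=0$, one has
$$\int_B g\,\d\nu=\int_{B\setminus\{0\}} m_i\,\d\mu_S=\nu_i(B),$$
where the last equality again uses $\nu_i(\{0\})=0$. This identifies $g$ as $\frac{\d\nu_i}{\d\nu}$.

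\textbf{Main obstacle.} The only delicate point is that the chain rule and the uniqueness of $\frac{\d\nu_i}{\d\nu}$ hold only $\nu$-a.e. To pass to the "$\mu_S$-a.e.\ $s>0$" formulation, one must observe that on $(0,\infty)$ the measures $\nu$ and $\mu_S$ are mutually absolutely continuous, since the density $s$ is strictly positive there. Hence $\nu$-null subsets of $(0,\infty)$ are $\mu_S$-null.
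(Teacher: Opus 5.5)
Your proposal is correct and follows the same route as the paper: null-set arguments for $\nu_i\ll\mu_S$ and $\nu\ll\mu_S$, the bound $0\le X_i\le S$ for $\nu_i\ll\nu$, and then the Radon--Nikodym chain rule with $\d\nu/\d\mu_S=s$. The differences are minor refinements. You obtain $\d\nu/\d\mu_S(s)=s$ directly from the law of $S$, where the paper sums $\E[X_j\mid S=s]$. You also make explicit, through your direct check and the mutual absolute continuity of $\nu$ and $\mu_S$ on $(0,\infty)$, why the $\nu$-a.e.\ defined derivative can be stated $\mu_S$-a.e.\ for $s>0$, which the paper leaves implicit.
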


Lemma \ref{lem:absolute-continuity} allows one to work with the CMRS proportion
\begin{equation*}
\pi_i(s):=\begin{cases}
\E[X_i\mid S=s]/s,& s>0,\\
0,& s=0,
\end{cases}
\end{equation*}
which can be interpreted as the proportion of the aggregate loss allocated to component $i$ at level $s$. The function $\pi_i$ equals $\d\nu_i/\d\nu$ almost everywhere and satisfies $\sum_{i=1}^n \pi_i(s)=1$ for $\mu_S$-almost every $s>0$.
For numerical applications, proportions can be more stable than conditional means in low-probability events, where both the numerator and denominator are subject to numerical instability. 

The transform-level interpretation of CMRS leads to transform-level diagnostics that one may use as numerical consistency checks to verify that the numerical evaluation of the allocation measures $\nu_i$ and their transforms $\mathcal L_i^*$ is stable and accurate. We provide two diagnostic identities from the budget-balance condition and the diagonal identity. First, from $\sum_{i=1}^n \nu_i=\nu$, we have $\sum_{i=1}^n \frac{\d\nu_i}{\d\mu_S}(s)=s$
for $\mu_S$-almost every $s>0$. This is the measure-level budget-balance condition, which holds as an exact equality, and can be used to verify the numerical evaluation of the allocation measures $\nu_i$ and their derivatives. If $S$ has a density $f_S$ and each $\nu_i$ has a density $\xi_i$, then the balance condition gives
\begin{equation}\label{eq:budget-balance-density}
	\sum_{i=1}^n \xi_i(s)=sf_S(s)
\end{equation}
for Lebesgue-almost every $s>0$. In practice, \eqref{eq:budget-balance-density} serves as an effective diagnostic for numerical Laplace inversion. However, when $f_S$ and all $\xi_i$ are recovered with the same inversion routine and identical tuning parameters, any systematic bias common to all inversions may cancel in the sum, so \eqref{eq:budget-balance-density} is sensitive to discrepancies between the individual transforms $\{\mathcal L_i^*\}$ and $\mathcal L_S$ rather than to overall inversion accuracy. For this reason, we complement \eqref{eq:budget-balance-density} with normalization and moment checks (evaluated from the transform at $t=0$ when moments are finite), with sensitivity analysis of the recovered densities to the inversion parameters, and, when feasible, with comparisons across two inversion schemes. A second diagnostic follows from summing \eqref{eq:Li-star-expectation} over $i$: since $\sum_{i=1}^n \mathcal L_i^*(t)=\E[Se^{-tS}]$ for each $t>0$ and $\frac{\d}{\d t}\mathcal L_S(t)=-\E[Se^{-tS}]$ for $t>0$, we obtain
\begin{equation}\label{eq:diag-diagnostic}
\sum_{i=1}^n \mathcal L_i^*(t)=-\mathcal L_S'(t),\qquad t>0.
\end{equation}
which can also be used to verify the numerical evaluation of $\mathcal L_i^*$.

The analysis in this section yields a practical framework for computing CMRS allocations from the joint Laplace--Stieltjes transform, which we summarize in Algorithm \ref{alg:cmrs-procedure}. If the joint LST is available in closed form, the procedure yields closed-form or semi-analytic CMRS allocations. If the joint LST is not available in closed form but can be evaluated numerically, the procedure still applies and yields numerical CMRS allocations through numerical inversion of the transforms $\mathcal L_S$ and $\mathcal L_i^*$.

\begin{algorithm}[htbp]
\caption{Transform-based procedure to compute continuous CMRS allocations}\label{alg:cmrs-procedure}
\DontPrintSemicolon
\textbf{1.} Evaluate $\mathcal L_S(t)=\mathcal L_{\bm X}(t,\dots,t)$.\;
\textbf{2.} Evaluate $\mathcal L_i^*(t)=-\left.\frac{\partial}{\partial t_i}\mathcal L_{\bm X}(t_1,\dots,t_n)\right|_{t_1=\cdots=t_n=t}$.\;
\textbf{3.} If $S$ has atoms, subtract the corresponding atomic terms from $\mathcal L_S$ and $\mathcal L_i^*$.\;
\textbf{4.} Numerically invert the absolutely continuous remainders on $(0,\infty)$ to recover $f_S$ and $\xi_i$.\;
\textbf{5.} On the absolutely continuous part, define $h_i(s)=\xi_i(s)/f_S(s)$ when $f_S(s)>0$.\;
\textbf{6.} At each atom $s_j$ with $\mu_S(\{s_j\})>0$, compute $h_i(s_j)=\nu_i(\{s_j\})/\mu_S(\{s_j\})$.\;
\textbf{7.} Diagnostics:\;
\quad \textbf{(a)} On the continuous part, verify $\sum_i \xi_i(s)\approx s f_S(s)$.\;
\quad \textbf{(b)} At atoms, verify $\sum_i \nu_i(\{s_0\})=s_0 \mu_S(\{s_0\})$.\;
\quad \textbf{(c)} In the transform domain, verify $\sum_i \mathcal L_i^*(t)\approx -\mathcal L_S'(t)$.\;
\end{algorithm}

\section{Closed-form CMRS from the LST}\label{sec:closed-form}

In this section, we apply the transform-based representation of CMRS to derive closed-form or semi-analytic expressions for CMRS allocations in several tractable model classes, including frailty dependence constructions, exponential dispersion family margins, matrix-exponential distributions, and compound distributions. We will first discuss the frailty framework, which is closely related to Archimedean copulas and allows for convenient factorization of the joint LST, and then specialize to the case of exponential dispersion family margins under frailty, which yields compact LST expressions. Note that we consider dependent models in this section since the independence case is recovered as a special case of the frailty construction with degenerate distribution for $\Theta$.

\subsection{Frailty models and Archimedean dependence}\label{subsec:frailty-framework}

A broad class of dependence models 
is defined through a frailty construction, which specifies the risks as conditionally independent given a common factor $\Theta\ge 0$. This is also closely related to Archimedean copulas, where dependence is encoded by the LST of $\Theta$, see for example \cite{marshall1988families, oakes1989bivariate, mcneil2009multivariate}. This construction is useful when working with Laplace transforms, as in this paper, since the conditional independence given $\Theta$ allows for factorization of the joint LST into a product of conditional LSTs, and the unconditional LST is then obtained by integrating over the distribution of $\Theta$.

Assume that $\Theta$ is a non-negative random variable and that $(X_1,\dots,X_n)$ are conditionally independent given $\Theta$. For each $i$, let $\mathcal L_{X_i\mid\Theta=\theta}(t):=\E[e^{-tX_i}\mid \Theta=\theta]$ denote the conditional LST. Assume $(\theta,t)\mapsto \mathcal L_{X_i\mid\Theta=\theta}(t)$ is jointly measurable. Then, for all $(t_1,\dots,t_n)\in[0,\infty)^n$,
\begin{equation}\label{eq:frailty-joint-LST}
\mathcal L_{\bm X}(t_1,\dots,t_n)
= \E\left[\prod_{j=1}^n \mathcal L_{X_j\mid\Theta}(t_j)\right]
= \int_0^\infty \prod_{j=1}^n \mathcal L_{X_j\mid\Theta=\theta}(t_j) \mu_\Theta(\d\theta),
\end{equation}
where $\mu_\Theta$ is the law of $\Theta$.
In particular,
\begin{equation}\label{eq:frailty-sum-LST}
\mathcal L_S(t)=\int_0^\infty \prod_{j=1}^n \mathcal L_{X_j\mid\Theta=\theta}(t)\mu_\Theta(\d\theta),\qquad t\ge 0.
\end{equation}

Under the frailty model, $\mathcal L_i^*(t)=\E[X_i e^{-tS}]$ reduces to a one-dimensional integral over $\mu_\Theta$ involving derivatives of the conditional LSTs.

\begin{proposition}\label{prop:frailty-Lstar}
Assume $X_1,\dots,X_n$ are conditionally independent given $\Theta$ and $S=\sum_{j=1}^n X_j$. For $t>0$, we have
$$L_i^*(t)=\int \left(\mathbb{E}[X_i e^{-tX_i}\mid \Theta=\theta]\prod_{j\ne i}\mathbb{E}[e^{-tX_j}\mid \Theta=\theta]\right)\mu_\Theta(\d\theta).$$
\end{proposition}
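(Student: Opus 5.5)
The plan is to start from the expectation form of $\mathcal L_i^*$ in Theorem~\ref{thm:main-theorem}(i), namely $\mathcal L_i^*(t)=\E[X_ie^{-tS}]$ for $t>0$, and then condition on $\Theta$. We write $X_ie^{-tS}=\left(X_ie^{-tX_i}\right)\prod_{j\neq i}e^{-tX_j}$, which is a product of nonnegative measurable functions of the individual components. Moreover, $0\le X_ie^{-tX_i}\le 1/(et)$ for $t>0$, so the integrand is bounded and integrable. This requires no moment assumption on $X_i$, which is why the statement is restricted to $t>0$.

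Next, apply the tower property: $\E[X_ie^{-tS}]=\E\big[\E[X_ie^{-tS}\mid\Theta]\big]$. Conditional independence of $X_1,\dots,X_n$ given $\Theta$ implies that conditional expectations of products of bounded measurable functions $g_j(X_j)$ factorize. Taking $g_i(x)=xe^{-tx}$ and $g_j(x)=e^{-tx}$ for $j\neq i$, all bounded, gives
\[
\E[X_ie^{-tS}\mid\Theta]=\E[X_ie^{-tX_i}\mid\Theta]\prod_{j\neq i}\E[e^{-tX_j}\mid\Theta]\quad\text{a.s.}
\]
We then express the outer expectation as an integral against $\mu_\Theta$. For this we choose regular conditional distributions of $X_j$ given $\Theta=\theta$, which exist because the $X_j$ are real valued. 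With these, each conditional expectation is a measurable function of $\theta$, and the integral formula follows.

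The main obstacle is technical rather than conceptual: we must make sure the versions $\theta\mapsto\E[X_ie^{-tX_i}\mid\Theta=\theta]$ are measurable and consistent with the factorization. The paper assumes joint measurability of $(\theta,t)\mapsto\mathcal L_{X_i\mid\Theta=\theta}(t)$. If needed, I would also note that $\E[X_ie^{-tX_i}\mid\Theta=\theta]=-\frac{\d}{\d t}\mathcal L_{X_i\mid\Theta=\theta}(t)$. This follows by applying Theorem~\ref{thm:main-theorem}(i) with $n=1$ to the conditional law, or directly by dominated convergence using $xe^{-tx}\le 1/(et)$ locally uniformly in $t$. As a consistency check, the same result can be obtained by differentiating \eqref{eq:frailty-joint-LST} under the integral sign, with domination by $1/(et_i)$ on neighbourhoods of $t_i>0$, and then evaluating on the diagonal.
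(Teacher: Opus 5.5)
Your proposal is correct and follows essentially the same route as the paper: the bound $0\le X_ie^{-tS}\le 1/(et)$, the tower property, factorization of $\E[X_ie^{-tS}\mid\Theta]$ by conditional independence, and integration against $\mu_\Theta$, with measurability handled through regular conditional distributions as in the paper's preliminary step. Your closing check, differentiating the frailty joint LST under the integral sign and evaluating on the diagonal, is a valid alternative derivation but is not needed for the result.
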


When $\Theta$ acts as a random scale parameter, for example in the model $\bar F_{X_i\mid\Theta=\theta}(x)=\exp(-\theta x/\lambda_i)$, the unconditional survival is $\bar F_{X_i}(x)=\mathcal L_\Theta(x/\lambda_i)$ and the survival copula is Archimedean with generator $\phi(u)=\mathcal L_\Theta(u)$. 

\subsection{Exponential dispersion family and Tweedie margins under frailty}\label{subsec:edf-frailty}

Frailty constructions are especially convenient when conditional margins belong to the exponential dispersion family (EDF), which includes gamma, inverse Gaussian, and Tweedie distributions \cite{jorgensen1987exponential}, which are widely used in insurance applications. The conditional independence given $\Theta$ and the EDF structure allows for convenient factorization of the joint LST. Conditional on $\Theta=\theta$, assume that each random variable $X_i$ has a one-parameter EDF density of the form
	\begin{equation}\label{eq:edf-density}
	f_{X_i\mid\Theta=\theta}(x)
	=\exp\left\{
	\frac{x\eta_i(\theta)-\kappa_i(\eta_i(\theta))}{\phi_i}
	\right\}b_i(x,\phi_i),
			\qquad x\in\mathcal X_i\subseteq[0,\infty),
			\end{equation}
where $\phi_i>0$ is the dispersion parameter, $\eta_i(\theta)$ is the canonical parameter and $\kappa_i$ is the cumulant function. We assume $(b_i,\kappa_i)$ satisfy the standard EDF regularity conditions, see \cite{jorgensen1987exponential} for details. The mean and variance of $X_i$ conditional on $\Theta=\theta$ are given by $\E[X_i\mid\Theta=\theta]=\kappa_i'(\eta_i(\theta))$ and $\mathrm{Var}(X_i\mid\Theta=\theta)=\phi_i \kappa_i''(\eta_i(\theta))$, respectively.

Whenever $\eta_i(\theta)+\phi_i u$ lies in the domain of $\kappa_i$, the conditional moment-generating function of $X_i$ given $\Theta=\theta$ is 
$$M_{X_i\mid\Theta=\theta}(u):=\E[e^{uX_i}\mid\Theta=\theta] = \exp\left\{\frac{\kappa_i(\eta_i(\theta)+\phi_i u)-\kappa_i(\eta_i(\theta))}{\phi_i}\right\}.$$
Consequently, for all $t\ge 0$ such that $\eta_i(\theta)-\phi_i t$ lies in the domain of $\kappa_i$,
\begin{align}
\mathcal L_{X_i\mid\Theta=\theta}(t)
&=\exp\left\{\frac{\kappa_i(\eta_i(\theta)-\phi_i t)-\kappa_i(\eta_i(\theta))}{\phi_i}\right\},\label{eq:edf-LST}\\
\partial_t \mathcal L_{X_i\mid\Theta=\theta}(t)
&=-\kappa_i'(\eta_i(\theta)-\phi_i t)\mathcal L_{X_i\mid\Theta=\theta}(t).\label{eq:edf-derivative}
\end{align}
We obtain semi-analytic formulas for the aggregate and allocation transforms by integrating the conditional transforms over the mixing law of $\Theta$. 
\begin{proposition}\label{prop:edf-frailty-LST}
	Under the frailty and EDF settings of Proposition \ref{prop:frailty-Lstar}, for all $t>0$ such that $\eta_j(\theta)-\phi_j t$ lies in the domain of $\kappa_j$ for all $j$ and $\mu_\Theta$-almost every $\theta$,
\begin{align}
\mathcal L_S(t)
&=\int_0^\infty
\exp\left\{\sum_{j=1}^n\frac{\kappa_j(\eta_j(\theta)-\phi_j t)-\kappa_j(\eta_j(\theta))}{\phi_j}\right\}\mu_\Theta(\d\theta),\label{eq:edf-frailty-sum}\\
\mathcal L_i^*(t)
&=\int_0^\infty
\kappa_i'(\eta_i(\theta)-\phi_i t)
\exp\left\{\sum_{j=1}^n\frac{\kappa_j(\eta_j(\theta)-\phi_j t)-\kappa_j(\eta_j(\theta))}{\phi_j}\right\}\mu_\Theta(\d\theta).\label{eq:edf-frailty-Li}
\end{align}
\end{proposition}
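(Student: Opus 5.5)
The plan is to derive both identities by conditioning on $\Theta$ and then specialising the conditional transforms to the EDF form \eqref{eq:edf-LST}--\eqref{eq:edf-derivative}.

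\textbf{Aggregate transform.} Starting from \eqref{eq:frailty-sum-LST}, I substitute \eqref{eq:edf-LST} into each factor $\mathcal L_{X_j\mid\Theta=\theta}(t)$. This requires $\eta_j(\theta)-\phi_j t$ to lie in the domain of $\kappa_j$ for $\mu_\Theta$-almost every $\theta$, which is exactly the stated hypothesis. Because each factor is an exponential, the product turns into the exponential of the sum of the cumulant differences, and \eqref{eq:edf-frailty-sum} follows at once.

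\textbf{Allocation transform.} Here I apply Proposition \ref{prop:frailty-Lstar}, which writes $\mathcal L_i^*(t)$ as the $\mu_\Theta$-integral of $\E[X_ie^{-tX_i}\mid\Theta=\theta]\prod_{j\ne i}\mathcal L_{X_j\mid\Theta=\theta}(t)$. The one substantive step is the identity
\[
\E[X_ie^{-tX_i}\mid\Theta=\theta]=-\partial_t\mathcal L_{X_i\mid\Theta=\theta}(t)=\kappa_i'(\eta_i(\theta)-\phi_i t)\,\mathcal L_{X_i\mid\Theta=\theta}(t).
\]
I would justify it directly from the density \eqref{eq:edf-density} instead of appealing to differentiation under the integral. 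Multiplying the density by $e^{-tx}$ and normalising gives again an EDF density, with canonical parameter $\eta_i(\theta)-\phi_i t$. The normalising constant of this tilted density is precisely $\mathcal L_{X_i\mid\Theta=\theta}(t)$ as given in \eqref{eq:edf-LST}. Consequently, $\E[X_ie^{-tX_i}\mid\Theta=\theta]$ equals $\mathcal L_{X_i\mid\Theta=\theta}(t)$ times the mean of the tilted EDF law. By the standard EDF mean formula, applied at the new canonical parameter, that mean is $\kappa_i'(\eta_i(\theta)-\phi_i t)$.

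As a cross-check, the same identity follows from Theorem \ref{thm:main-theorem}(i) in the one-dimensional case, applied conditionally. That result gives two-sided differentiability for $t>0$ with derivative $-\E[X_ie^{-tX_i}\mid\Theta=\theta]$, and differentiating \eqref{eq:edf-LST} yields \eqref{eq:edf-derivative}. Substituting this identity into Proposition \ref{prop:frailty-Lstar} and recombining the factors into a single exponential gives \eqref{eq:edf-frailty-Li}.

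\textbf{Main obstacle.} The delicate points are measurability and integrability, both needed so that the $\mu_\Theta$-integrals make sense and Proposition \ref{prop:frailty-Lstar} applies.
\begin{itemize}
\item Joint measurability in $\theta$ is inherited from the assumed joint measurability of the conditional LSTs.
\item Integrability comes from a pointwise bound. Since $xe^{-tx}\le 1/(et)$ for $t>0$, the integrand is bounded by $1/(et)$.
\end{itemize}
Hence $\mathcal L_i^*(t)\le 1/(et)<\infty$, and no domination or Fubini issue arises beyond what Proposition \ref{prop:frailty-Lstar} already handles.
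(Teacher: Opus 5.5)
Your proof is correct and follows the paper's argument. For the aggregate transform you insert \eqref{eq:edf-LST} into \eqref{eq:frailty-sum-LST}, and for the allocation transform you combine Proposition~\ref{prop:frailty-Lstar} with the identity $\E[X_ie^{-tX_i}\mid\Theta=\theta]=\kappa_i'(\eta_i(\theta)-\phi_i t)\,\mathcal L_{X_i\mid\Theta=\theta}(t)$. Your tilting justification of that identity (the tilted density is again EDF with canonical parameter $\eta_i(\theta)-\phi_i t$) is only a cosmetic alternative to the paper's direct appeal to \eqref{eq:edf-derivative}, and your $1/(et)$ bound is the same integrability check used for Proposition~\ref{prop:frailty-Lstar}.
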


\subsection{Mixed exponentials under frailty}\label{subsec:mex-arch}

We now consider the special case of mixed exponential margins: assume that $X_i\mid(\Theta=\theta) \sim \mathrm{Exp}(\theta/\lambda_i)$ for $i=1,\dots,n,$ independently for fixed scales $\lambda_i>0$ and for some positive random variable $\Theta$ with LST $\mathcal L_\Theta$. 

\begin{proposition}\label{thm:mex-arch}
Assume $\p(\Theta>0)=1$ and $\mathcal L_\Theta$ is differentiable on $(0,\infty)$.
Define, for distinct $\lambda_1,\dots,\lambda_n$, the coefficients
$$
A_k(\bm\lambda) := \prod_{m\neq k}\frac{\lambda_k}{\lambda_k-\lambda_m}.
$$
\begin{enumerate}[label=\textnormal{(\roman*)}, leftmargin=3.2em]
\item If $\lambda_1,\dots,\lambda_n$ are distinct, then for $s>0$ the aggregate random variable $S$ has density
$$
f_S(s)
= -\sum_{k=1}^n \frac{A_k(\bm\lambda)}{\lambda_k}
\mathcal L_\Theta'\left(\frac{s}{\lambda_k}\right).
$$
The corresponding unconditional expected allocation density is
\begin{align*}
\xi_i(s)
&= -\frac{s}{\lambda_i} A_i(\bm\lambda)
   \mathcal L_\Theta'\left(\frac{s}{\lambda_i}\right)
 + \sum_{k\neq i} A_k(\bm\lambda)\frac{\lambda_i}{\lambda_i-\lambda_k}
   \left( \mathcal L_\Theta\left(\frac{s}{\lambda_i}\right) - \mathcal L_\Theta\left(\frac{s}{\lambda_k}\right) \right).
	\end{align*}
	\item If $\lambda_1=\cdots=\lambda_n=\lambda$ and $S$ has a density, then for $\mu_S$-almost every $s>0$,
	$$
	\xi_i(s) = \frac{s}{n} f_S(s),
	\qquad
	\E[X_i\mid S=s] = \frac{s}{n}.
$$
\end{enumerate}
\end{proposition}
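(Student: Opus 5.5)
The plan is to condition on $\Theta=\theta$, compute everything for independent exponentials with rates $r_k:=\theta/\lambda_k$, and then integrate against $\mu_\Theta$. The key observation is that the hypoexponential coefficients depend only on the ratios of the rates:
$$\prod_{m\ne k}\frac{r_m}{r_m-r_k}=\prod_{m\neq k}\frac{1/\lambda_m}{1/\lambda_m-1/\lambda_k}=\prod_{m\ne k}\frac{\lambda_k}{\lambda_k-\lambda_m}=A_k(\bm\lambda).$$
So they do not depend on $\theta$. When $\lambda_1,\dots,\lambda_n$ are distinct, the conditional density of $S$ given $\Theta=\theta$ is the hypoexponential density
$$f_{S\mid\Theta=\theta}(s)=\sum_{k=1}^n A_k(\bm\lambda)\frac{\theta}{\lambda_k}e^{-\theta s/\lambda_k}.$$
Integrating term by term and using $\int\theta e^{-\theta u}\mu_\Theta(\d\theta)=-\mathcal L_\Theta'(u)$ gives the stated $f_S$. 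This identity is differentiation under the integral sign, valid for $u>0$ because $\theta e^{-\theta u}$ is bounded in $\theta$. The interchange of the finite sum and the integral is justified because each term is absolutely integrable for $s>0$. Here $\p(\Theta>0)=1$ ensures the conditional law is a genuine exponential mixture with no atom of $\Theta$ at $0$.

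For $\xi_i$, I would apply Corollary \ref{cor:convolution-independent} conditionally on $\Theta=\theta$, using Proposition \ref{prop:frailty-Lstar} to justify mixing afterwards. This gives
$$\xi_i(s\mid\theta)=\int_0^s x\,r_ie^{-r_ix}f_{S_{-i}\mid\theta}(s-x)\d x,$$
where $f_{S_{-i}\mid\theta}$ is the hypoexponential density of the other $n-1$ components. Equivalently, one can invert the conditional transform
$$\frac{r_i}{(r_i+t)^2}\prod_{j\neq i}\frac{r_j}{r_j+t}$$
by partial fractions; it has a double pole at $-r_i$ and simple poles at $-r_k$, $k\ne i$. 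The double pole produces a term $c_i(\theta)\,s\,e^{-r_is}$. The simple poles produce terms $e^{-r_ks}$ with coefficients that again reduce to functions of the $\lambda$'s only, since the $\theta$ powers cancel in ratios of the form $r_i r_k/(r_k-r_i)^2$.

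I would then regroup the $e^{-r_is}$ and $e^{-r_ks}$ contributions into the pairs $\frac{\lambda_i}{\lambda_i-\lambda_k}A_k(\bm\lambda)\bigl(e^{-r_is}-e^{-r_ks}\bigr)$. The remaining term should be exactly $\frac{s}{\lambda_i}A_i(\bm\lambda)\,\theta e^{-r_is}$. Integrating over $\theta$, the pairs give $\mathcal L_\Theta(s/\lambda_i)-\mathcal L_\Theta(s/\lambda_k)$, and the $s$-term gives $-\frac{s}{\lambda_i}A_i\mathcal L_\Theta'(s/\lambda_i)$. A useful consistency check is that $\xi_i(s\mid\theta)$ must vanish to second order at $s=0$, which forces the total coefficient of the single exponentials to sum to zero. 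This is what permits the pairing. A second check is the budget balance \eqref{eq:budget-balance-density}.

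The main obstacle will be the algebra identifying these coefficients with the stated forms. In particular, I must show that the double-pole residue equals $\theta A_i/\lambda_i$. This requires showing that the derivative of $\prod_{j\ne i} r_j/(r_j+t)$ at $t=-r_i$ combines with the $k\ne i$ residues into the stated pairs. I would attack this by writing the residues explicitly in terms of $A_k(\bm\lambda)$ and using the partial-fraction identity $\sum_k A_k(\bm\lambda)=1$ (valid for $n\ge 2$) to eliminate the leftover constant.

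For part (ii), the vector $\bm X$ is exchangeable when all $\lambda_j=\lambda$, since the joint LST \eqref{eq:frailty-joint-LST} is then symmetric in $(t_1,\dots,t_n)$. Hence the measures $\nu_i$ all coincide. Because $\sum_i\nu_i=\nu$, each $\nu_i=\nu/n$. By Lemma \ref{lem:absolute-continuity}, $\E[X_i\mid S=s]=s/n$ for $\mu_S$-almost every $s$, and Theorem \ref{thm:main-theorem}(iii) then gives $\xi_i(s)=\frac{s}{n}f_S(s)$.
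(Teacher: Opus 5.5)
Your proposal is correct. For $f_S$ it is the paper's argument: condition on $\Theta$, note $A_k(\bm r)=A_k(\bm\lambda)$, and integrate using $\E[\Theta e^{-u\Theta}]=-\mathcal L_\Theta'(u)$. It differs from the paper for $\xi_i$ and for part (ii).

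\textbf{The density $\xi_i$.} The paper avoids the double pole altogether. It writes $\mathcal L^*_{i\mid\theta}(t)=\mathcal L_{S\mid\theta}(t)/(r_i+t)$, i.e.\ $\xi_{i\mid\theta}=f_{S\mid\theta}*e^{-r_i\cdot}$, and convolves each term $A_k r_k e^{-r_k u}$ with $e^{-r_i(s-u)}$. The $k=i$ term gives $A_i r_i s e^{-r_i s}$, and each $k\neq i$ term gives $A_k\frac{r_k}{r_k-r_i}(e^{-r_is}-e^{-r_ks})$, so the pairing appears automatically. Your conditional Corollary~\ref{cor:convolution-independent} route reduces to this once you note $x\,r_ie^{-r_ix}=(r_ie^{-r_i\cdot}*e^{-r_i\cdot})(x)$.

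Your partial-fraction route also works, but the obstacle you flag is smaller than you think, and slightly misplaced:
\begin{itemize}
\item The coefficient of $(r_i+t)^{-2}$ is simply $r_i\prod_{j\neq i}r_j/(r_j-r_i)=r_iA_i(\bm r)$; no derivative is involved.
\item The coefficients of $(r_k+t)^{-1}$ for $k\neq i$ are $A_k(\bm r)\,r_k/(r_i-r_k)$.
\item The derivative enters only the coefficient of $(r_i+t)^{-1}$, and you never need to compute it. Since $\lim_{t\to\infty}t\,\mathcal L^*_{i\mid\theta}(t)=0$ (equivalently $\xi_{i\mid\theta}(0^+)=0$), it must equal minus the sum of the other simple-pole coefficients.
\end{itemize}
That vanishing-sum identity, not $\sum_kA_k=1$, is what yields the pairs.

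\textbf{Part (ii).} The paper uses exchangeability conditional on $\Theta$ to get $\E[X_i\mid S,\Theta]=S/n$, then applies the tower property. Your argument goes from the symmetric joint LST to an exchangeable law, hence $\nu_i=\nu/n$, and then uses Lemma~\ref{lem:absolute-continuity} and Theorem~\ref{thm:main-theorem}(iii). It works directly with the allocation measures and needs only symmetry of the law rather than the frailty structure, so it is arguably the cleaner of the two.
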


When the parameters $\lambda_i$ are all distinct, the conditional expressions for the density or expected allocations involve sums of exponential terms. If some $\lambda_i$ coincide, indices sharing the same $\lambda$ contribute an Erlang block in the sum, so the conditional law of the aggregate random variable is no longer a function of exponential terms but admits a mixed Erlang representation. The same applies to the expected allocations: conditionally on $\Theta$, expected allocations also admit an Erlang mixture representation, with the groups containing duplicated $\lambda$ parameters generating the higher-order Erlang components. These expressions are easy to derive by applying the derivative identity to the joint LST and evaluating the resulting integrals, but are omitted for brevity. 

\begin{example}   
   Assume that $\Theta$ is Gamma distributed with shape parameter $\alpha>0$ and rate parameter $1$, so that $\mathcal L_\Theta(u)=(1+u)^{-\alpha}$ for $u\ge 0$. One can show that, for each $i$, $X_i$ has a Pareto type II (Lomax) margin with shape $\alpha$ and scale $\lambda_i$. Since
\begin{equation}\label{eq:clayton-lst-derivative}
\mathcal L_\Theta'(u)=-\alpha(1+u)^{-\alpha-1},\qquad u>0,
\end{equation}
for distinct $\lambda_1,\dots,\lambda_n$,
$$
A_k(\bm\lambda):=\prod_{m\neq k}\frac{\lambda_k}{\lambda_k-\lambda_m}.
$$
Substituting \eqref{eq:clayton-lst-derivative} into Proposition \ref{thm:mex-arch}(i) gives, for $s>0$,
\begin{align*}
   f_S(s)&=\alpha\sum_{k=1}^n \frac{A_k(\bm\lambda)}{\lambda_k} \left(1+\frac{s}{\lambda_k}\right)^{-\alpha-1};\\
\xi_i(s) &=\alpha A_i(\bm\lambda)\frac{s}{\lambda_i}\left(1+\frac{s}{\lambda_i}\right)^{-\alpha-1} + \sum_{k\neq i} A_k(\bm\lambda)\frac{\lambda_i}{\lambda_i-\lambda_k} \left[\left(1+\frac{s}{\lambda_i}\right)^{-\alpha}-\left(1+\frac{s}{\lambda_k}\right)^{-\alpha}\right].
\end{align*}

\end{example}

\begin{example}

   Let $\Theta$ have a Lévy distribution (stable distribution with shape parameter $\frac12$) with LST
$$\mathcal L_\Theta(u):=\E[e^{-u\Theta}] = \exp\left(-\kappa\sqrt{u}\right),\qquad u\ge 0,$$
for some $\kappa>0$. One can show that, for each $i$, $X_i$ is Weibull distributed with shape $1/2$ and scale $\lambda_i/\kappa^2$, with density
$$f_{X_i}(x)=\frac{\kappa}{2\sqrt{\lambda_i x}}\exp\left(-\kappa\sqrt{\frac{x}{\lambda_i}}\right),\qquad x>0.$$

Since
\begin{equation}\label{eq:levy-lst-derivative}
\mathcal L_\Theta'(u) = -\frac{\kappa}{2\sqrt{u}}\exp\left(-\kappa\sqrt{u}\right),\qquad u>0,
\end{equation}
Substituting \eqref{eq:levy-lst-derivative} into Proposition \ref{thm:mex-arch}(i) yields
\begin{align*}
   f_S(s)&=\frac{\kappa}{2\sqrt{s}}\sum_{k=1}^n \frac{A_k(\bm\lambda)}{\sqrt{\lambda_k}}\exp\left(-\kappa\sqrt{\frac{s}{\lambda_k}}\right);\\
\xi_i(s) &=\frac{\kappa}{2}A_i(\bm\lambda)\sqrt{\frac{s}{\lambda_i}}
\exp\left(-\kappa\sqrt{\frac{s}{\lambda_i}}\right)  + \sum_{k\neq i} A_k(\bm\lambda)\frac{\lambda_i}{\lambda_i-\lambda_k}
\left(\exp\left(-\kappa\sqrt{\frac{s}{\lambda_i}}\right) - \exp\left(-\kappa\sqrt{\frac{s}{\lambda_k}}\right)\right).
\end{align*}
\end{example}

\subsection{Matrix-exponential margins}\label{subsec:me-example}

Matrix-exponential (ME) distributions are the nonnegative distributions with rational LST \cite{bladt2017matrixexponential}. They include phase-type (PH) distributions, hyperexponential mixtures, and mixtures of Erlang distributions, all of which play an important role in insurance applications; see \cite{willmot2011risk, cheung2022multivariate}. A nonnegative random variable $X$ is said to admit an ME representation of order $p$, written
$X\sim \mathrm{ME}_p(\bm{\alpha},\mathbf{T},\bm{u})$, if its density on $(0,\infty)$ can be written as
\begin{equation*}
f_X(x)=\bm{\alpha} e^{\mathbf{T}x} \bm{u},\qquad x>0,
\end{equation*}
for some row vector $\bm{\alpha}\in\R^{1\times p}$, column vector $\bm{u}\in\R^{p\times 1}$, and matrix
$\mathbf{T}\in\R^{p\times p}$ whose eigenvalues have strictly negative real parts. The Laplace--Stieltjes transform is then the rational function
\begin{equation}\label{eq:me-lst}
\mathcal L_X(t)=\E \left[e^{-tX}\right]
=\p(X=0)+\bm{\alpha} (t\mathbf{I}-\mathbf{T})^{-1}\bm{u},\qquad t\ge 0,
\end{equation}
Differentiating \eqref{eq:me-lst} with respect to $t$ yields
\begin{equation*}
-\frac{\d}{\d t}\mathcal L_X(t)=\E \left[Xe^{-tX}\right]
=\bm{\alpha} (t\mathbf{I}-\mathbf{T})^{-2}\bm{u},\qquad t\ge 0.
\end{equation*}
More generally, for $k\in\mathbb N$ one has
$\E[X^k e^{-tX}]=k! \bm{\alpha} (t\mathbf{I}-\mathbf{T})^{-k-1}\bm{u}$.

The following example illustrates the CMRS computation in a simple ME setting.
\begin{example}\label{ex:me-two}
Let $X_1\sim\mathrm{Erlang}(2,\lambda)$ and $X_2\sim\mathrm{Exp}(\mu)$ be independent, and set $S=X_1+X_2$.
An ME representation for $X_1$ is
$$
\bm{\alpha}=(1,0),\qquad
\mathbf{T}=
\begin{pmatrix}
-\lambda & \lambda\\
0 & -\lambda
\end{pmatrix},\qquad
\bm{u}=
\begin{pmatrix}
0\\
\lambda
\end{pmatrix},
$$
so that $\mathcal L_{X_1}(t)=\bm{\alpha}(t\mathbf{I}-\mathbf{T})^{-1}\bm{u}=\left(\frac{\lambda}{\lambda+t}\right)^2$.
With $\mathcal L_{X_2}(t)=\frac{\mu}{\mu+t}$, we have
$$
\mathcal L_S(t)=\mathcal L_{X_1}(t)\mathcal L_{X_2}(t)=\frac{\lambda^2\mu}{(\lambda+t)^2(\mu+t)}.
$$
Assuming $\mu\neq\lambda$, Laplace inversion yields
$$
f_S(s)=\frac{\lambda^2\mu}{(\lambda-\mu)^2}\left(e^{-\mu s}-e^{-\lambda s}\left(1+(\lambda-\mu)s\right)\right),\qquad s>0.
$$

For the unconditional expected allocation density of $X_1$, Theorem \ref{thm:main-theorem} and independence give
$$
\mathcal L_1^*(t)=\E[X_1e^{-tS}]
=\E[X_1e^{-tX_1}] \E[e^{-tX_2}]
=\left(-\frac{\d}{\d t}\mathcal L_{X_1}(t)\right)\mathcal L_{X_2}(t)
=\frac{2\lambda^2\mu}{(\lambda+t)^3(\mu+t)}.
$$
	Laplace inversion gives, for $s>0$,
		$$
\xi_1(s)
=\frac{2\lambda^2\mu}{(\lambda-\mu)^3}\left(e^{-\mu s}-e^{-\lambda s}\left(1+(\lambda-\mu)s+\tfrac{(\lambda-\mu)^2}{2}s^2\right)\right).
			$$
	Writing $\delta:=\lambda-\mu$, the CMRS allocation $h_1(s)=\xi_1(s)/f_S(s)$ simplifies to
		$$\E[X_1\mid S=s] =\frac{2}{\delta} \frac{e^{\delta s}-1-\delta s-\tfrac12(\delta s)^2}{e^{\delta s}-1-\delta s}, \qquad s>0.$$
\end{example}
More generally, for independent ME margins, $\mathcal L_S$ and $\mathcal L_i^*$ are rational and Laplace inversion can be performed analytically using partial-fraction decomposition or matrix-exponential representations.

\subsection{Independent compound Katz distributions}\label{subsec:compound-katz}

Compound distributions are commonly used in insurance risk models, where $M_i$ denotes the claim count for policyholder $i$ and $B_{i,k}$ the severity of the $k$th claim. The aggregate loss of policyholder $i$ and the portfolio loss are then
$$X_i:=\sum_{k=1}^{M_i} B_{i,k}, \qquad S=\sum_{i=1}^n X_i.$$ We assume $M_i$ is independent of the i.i.d.\ severities $(B_{i,k})_{k\ge1}$, and the collective risk models $(M_i,(B_{i,k})_{k\ge1})$ are independent across $i=1,\dots,n$. Define the LST of the severity distribution and its negative derivative as
$$\phi_i(t):=\E[e^{-tB_{i,1}}],\qquad \phi_i^{(1)}(t):=-\phi_i'(t)=\E[B_{i,1}e^{-tB_{i,1}}],\qquad t>0.$$

A count distribution $M$ belongs to the Katz $(a,b,0)$ family if its probability generating function $P_M(z):=\E[z^M]$ satisfies
\begin{equation}\label{eq:katz-pgf-derivative}
P_M'(z)=\frac{a+b}{1-az} P_M(z),\qquad |z|<1,
\end{equation}
or equivalently, its probability mass function satisfies the recurrence relation
$$\p(M=k)=\left(a+\frac{b}{k}\right)\p(M=k-1),$$
for $k\ge1$. This family includes the Poisson ($a=0$), binomial ($a<0$), and negative binomial ($a>0$) distributions. In this case, the LST of the compound distribution admits a convenient closed-form expression, and the expected allocation transform $\mathcal L_i^*$ can be expressed in terms of $\mathcal L_S$.

\begin{proposition}\label{prop:edf-frailty-katz}
Suppose the collective risk models $(M_i,(B_{i,k})_{k\ge1})$ are independent across $i=1,\dots,n$, and that each frequency distribution $M_i$ follows a Katz $(a_i,b_i,0)$ distribution. Then, for each $t>0$,
\begin{equation}\label{eq:Li-star-katz}
\mathcal L_i^*(t)
=\frac{a_i+b_i}{1-a_i\phi_i(t)} \phi_i^{(1)}(t) \mathcal L_S(t).
\end{equation}
\end{proposition}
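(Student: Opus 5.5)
The plan is to combine Corollary \ref{cor:independent} with a direct computation of the compound LST and its derivative. Since the collective risk models are independent across $i$, the $X_i$ are independent. Corollary \ref{cor:independent} then gives
$$\mathcal L_i^*(t)=\Bigl(-\tfrac{\d}{\d t}\mathcal L_{X_i}(t)\Bigr)\prod_{j\ne i}\mathcal L_{X_j}(t), \qquad \mathcal L_S(t)=\prod_{j}\mathcal L_{X_j}(t).$$
Thus it suffices to show, for each $t>0$,
$$-\mathcal L_{X_i}'(t)=\frac{a_i+b_i}{1-a_i\phi_i(t)}\phi_i^{(1)}(t)\mathcal L_{X_i}(t).$$
Multiplying this by $\prod_{j\ne i}\mathcal L_{X_j}(t)$ yields \eqref{eq:Li-star-katz}.

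\textbf{Step 1: the compound LST.} Condition on $M_i$ and use independence of $M_i$ from the i.i.d.\ severities. This gives $\mathcal L_{X_i}(t)=\E[\phi_i(t)^{M_i}]=P_{M_i}(\phi_i(t))$.

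\textbf{Step 2: chain rule.} For $t>0$, $\phi_i$ is differentiable with $\phi_i'(t)=-\phi_i^{(1)}(t)$. This is part (i) of Theorem \ref{thm:main-theorem} applied with $n=1$.

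We also need $P_{M_i}$ to be differentiable at $\phi_i(t)$. If $\p(B_{i,1}>0)>0$ then $0\le\phi_i(t)<1$, which lies inside $|z|<1$, where the power series $P_{M_i}$ is analytic. If $B_{i,1}=0$ a.s., then $\phi_i\equiv1$ and $\phi_i^{(1)}\equiv0$, so both sides of the target identity are zero for $i$'s factor and the claim is trivial. This case should be dispatched separately.

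In the main case,
$$-\mathcal L_{X_i}'(t)=P_{M_i}'(\phi_i(t))\,\phi_i^{(1)}(t).$$

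\textbf{Step 3: Katz identity.} Substitute \eqref{eq:katz-pgf-derivative} at $z=\phi_i(t)$:
$$P_{M_i}'(\phi_i(t))=\frac{a_i+b_i}{1-a_i\phi_i(t)}P_{M_i}(\phi_i(t))=\frac{a_i+b_i}{1-a_i\phi_i(t)}\mathcal L_{X_i}(t).$$
Combining with Step 2 gives the required identity, and hence \eqref{eq:Li-star-katz}.

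\textbf{Main obstacle.} The delicate step is justifying the Katz ODE at $z=\phi_i(t)$. For the negative binomial case ($0<a<1$), the relation holds on $|z|<1/a$, and $1-a_i\phi_i(t)>0$ because $\phi_i(t)\le 1<1/a_i$. The binomial and Poisson cases have entire PGFs, so there is no restriction; for the binomial, $a_i<0$ gives $1-a_i\phi_i(t)>0$ as well.

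I would therefore check that the stated domain $|z|<1$ contains $\phi_i(t)\in[0,1)$, which settles differentiability and legitimacy of the substitution. The degenerate case $\phi_i\equiv1$ is handled by the remark in Step 2.
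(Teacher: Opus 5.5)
Your proposal is correct and follows essentially the same route as the paper: write $\mathcal L_{X_i}(t)=P_{M_i}(\phi_i(t))$, use independence to factor $\mathcal L_i^*(t)=P_{M_i}'(\phi_i(t))\,\phi_i^{(1)}(t)\prod_{j\ne i}P_{M_j}(\phi_j(t))$ via the chain rule, and substitute the Katz identity at $z=\phi_i(t)$. Your additional checks go beyond what the paper writes out: the degenerate case $B_{i,1}=0$ a.s., and the observation that $\phi_i(t)\in(0,1)$ lies in the domain where the PGF is differentiable and the Katz ODE holds.
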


\begin{remark}
Replacing Laplace transforms by ordinary generating functions recovers the compound Katz expected-allocation identities of \citet{blier-wong2025efficient}.
When $a_i=0$ (compound Poisson) and severities share a common distribution, \eqref{eq:Li-star-katz} gives $\mathcal L_i^*(t)\propto \mathcal L_S'(t)$, so $h_i(s)=(\lambda_i/\sum_j\lambda_j)s$, consistent with \cite{denuit2020largeloss}.
\end{remark}

The following example illustrates the application of Proposition \ref{prop:edf-frailty-katz} to a common-shock compound Poisson model. Note that the components of the model are not independent, but conditional independence given the common shock allows for a convenient factorization of the joint LST, after which inversion yields closed-form densities for the aggregate density and expected allocations.

\begin{example}\label{ex:common-shock-cp}
Consider a common-shock compound Poisson model with $n\ge 2$ business lines. Let $N_0\sim\mathrm{Pois}(\lambda_0)$ and $N_i\sim\mathrm{Pois}(\lambda_i)$ for $i=1,\dots,n$ be mutually independent claim counts. Fix weights $p_i\ge 0$ with $\sum_{i=1}^n p_i=1$. Let $(Z_k)_{k\ge 1}$ be i.i.d.\ common-shock severities with $Z_1\sim\mathrm{Exp}(\beta_0)$, and for each $i$ let $(Y_{i,m})_{m\ge 1}$ be i.i.d.\ idiosyncratic severities with $Y_{i,1}\sim\mathrm{Exp}(\beta_i)$, where $\beta_0,\beta_1,\dots,\beta_n>0$ are not necessarily equal. All severity sequences are mutually independent and independent of $(N_0,N_1,\dots,N_n)$. The loss of business line $i$ and the aggregate business lines are
$$X_i:=\sum_{k=1}^{N_0} p_i Z_k+\sum_{m=1}^{N_i} Y_{i,m},\qquad S:=\sum_{i=1}^n X_i.$$
Since $\sum_{i=1}^n p_i=1$, the aggregate simplifies to
$$S=\sum_{k=1}^{N_0} Z_k+\sum_{i=1}^n\sum_{m=1}^{N_i} Y_{i,m},$$
so the law of $S$ does not depend on the weight vector $(p_1,\dots,p_n)$. Using $\mathcal L_Z(u)=\beta_0/(\beta_0+u)$ and $\mathcal L_{Y_i}(u)=\beta_i/(\beta_i+u)$, the joint LST of $\bm X=(X_1,\dots,X_n)$ is
$$\mathcal L_{\bm X}(t_1,\dots,t_n)=\exp\left(\lambda_0\left(\frac{\beta_0}{\beta_0+\sum_{j=1}^n p_j t_j}-1\right)+\sum_{i=1}^n \lambda_i\left(\frac{\beta_i}{\beta_i+t_i}-1\right)\right),$$
for $(t_1,\dots,t_n)\in[0,\infty)^n$. Evaluating on the diagonal gives
$$\mathcal L_S(t)=\mathcal L_{\bm X}(t,\dots,t)=\exp \left(\lambda_0\left(\frac{\beta_0}{\beta_0+t}-1\right)+\sum_{i=1}^n \lambda_i\left(\frac{\beta_i}{\beta_i+t}-1\right)\right),\qquad t\ge 0,$$
and $S$ has a single atom at $0$ of size $\p(S=0)=\exp(-\lambda_S)$, where $\lambda_S:=\lambda_0+\sum_{i=1}^n \lambda_i$. Differentiating $\mathcal L_{\bm X}$ with respect to $t_i$ and evaluating on the diagonal yields the allocation transform
$$\mathcal L_i^*(t)=\E[X_ie^{-tS}]=\mathcal L_S(t)\left(\lambda_0p_i\,\frac{\beta_0}{(\beta_0+t)^2}+\lambda_i\,\frac{\beta_i}{(\beta_i+t)^2}\right),\qquad t>0,$$
and $\nu_i(\{0\})=\E[X_i\id{S=0}]=0$.

To obtain closed-form densities, define for $k=(k_0,k_1,\dots,k_n)\in\mathbb{N}_0^{n+1}$ the rational function
$$R_k(t):=\prod_{j=0}^n\left(\frac{\beta_j}{\beta_j+t}\right)^{k_j},\qquad t\ge 0,$$
and let $r_k$ denote its inverse Laplace transform, which is the density on $(0,\infty)$ of a sum of independent random variables, where for each $j\in\{0,1,\dots,n\}$ with $k_j\ge 1$, the $j$th summand follows a Gamma distribution with shape $k_j$ and rate $\beta_j$. Expanding the compound Poisson probabilities gives, for $s>0$,
$$f_S(s)=e^{-\lambda_S}\sum_{\substack{\bm k\in\mathbb{N}_0^{n+1}\\ k_0+\cdots+k_n\ge 1}}\left(\prod_{j=0}^n \frac{\lambda_j^{k_j}}{k_j!}\right) r_k(s),$$
and the density $\xi_i$ of $\nu_i$ on $(0,\infty)$ is
$$\xi_i(s)=e^{-\lambda_S}\sum_{\bm k\in\mathbb{N}_0^{n+1}}\left(\prod_{j=0}^n \frac{\lambda_j^{k_j}}{k_j!}\right)\left[\frac{\lambda_0p_i}{\beta_0}\, r_{k+2e_0}(s)+\frac{\lambda_i}{\beta_i}\, r_{k+2e_i}(s)\right],\qquad s>0,$$
where $e_j\in\mathbb{R}^{n+1}$ is the $(n+1)$-dimensional unit vector. If $\beta_0,\beta_1,\dots,\beta_n$ are pairwise distinct, each $r_k$ admits the partial-fraction form
$$r_k(s)=\sum_{\substack{j\in\{0,1,\dots,n\}:\\ k_j\ge 1}}\ \sum_{r=1}^{k_j}c_{j,r}(k)\,\frac{s^{r-1}}{(r-1)!}e^{-\beta_j s},\qquad s>0,$$
with coefficients
$$c_{j,r}(k)=\frac{1}{(k_j-r)!}\left.\frac{\d^{\,k_j-r}}{\d t^{\,k_j-r}}\Big[(t+\beta_j)^{k_j}R_k(t)\Big]\right|_{t=-\beta_j}.$$
If some rates coincide, the same representation holds with higher-order Erlang terms coming from repeated poles. On the absolutely continuous part, the CMRS allocation is then
$$h_i(s)=\frac{\xi_i(s)}{f_S(s)},\qquad s>0,\qquad\text{and}\qquad h_i(0)=0.$$
\end{example}

\section{Numerical Laplace inversion for CMRS}\label{sec:numerical}

When analytic inversion is infeasible or the inversion is too complicated to be practical, the CMRS allocation densities must be recovered numerically from their Laplace transforms. This section develops a practical computational framework based on the transform-level representation of CMRS. 

Recall that if $S$ has atoms or singular components, these must be handled separately by removing them from the LST; see Section \ref{subsec:atoms}. The methods described in this section apply to the absolutely continuous part of the CMRS allocation.

\subsection{The Gaver--Stehfest inversion formula}

Let $f:[0,\infty)\to\R$ be locally integrable and of exponential order, with Laplace transform
$$\hat f(t)=\int_0^\infty e^{-ts}f(s)\d s,\qquad t>0.$$
The Gaver--Stehfest \citep{gaver1966observing,stehfest1970algorithm} method produces an approximation to $f(s)$ at a fixed point $s>0$ from finitely many evaluations of $\hat f$ on the positive real axis.
In the notation of \cite{abate2006unified}, the $M$th-order approximation is
\begin{equation}\label{eq:GS-inversion-sec4}
f(s)\approx f^{\mathrm{GS}}_M(s)
:=\frac{\log(2)}{s}\sum_{k=1}^{2M}\zeta_k \hat f \left(\frac{k\log(2)}{s}\right),
\end{equation}
where the weights $\zeta_k$ depend only on $M$ and are given by
\begin{equation}\label{eq:GS-weights-sec4}
\zeta_k = (-1)^{M+k} \sum_{j=\lfloor(k+1)/2\rfloor}^{\min\{k,M\}} \frac{j^{M+1}}{M!}\binom{M}{j}\binom{2j}{j}\binom{j}{k-j},
\qquad k=1,\dots,2M.
\end{equation}
		
In CMRS computations, the same node set $\{k\log(2)/s:1\le k\le 2M\}$ can be used for all inversions at a fixed $s$, allowing $\mathcal L_S$ and all $\mathcal L_i^*$ to be evaluated once and combined.

\subsection{Complex-plane Euler inversion}
\label{subsec:euler-inversion}

In tails of the aggregate distribution, the Gaver--Stehfest method can be unstable due to the rapidly growing and alternating weights $\zeta_k$ and the relatively flat transform values $\hat f(k\log(2)/s)$. This issue is further exacerbated when computing CMRS ratios, since both the numerator and denominator are subject to similar cancellation effects, leading to numerical instability. The Euler method is an alternative Fourier-series method that evaluates the transform at complex points on a vertical line in the complex plane, which can mitigate cancellation issues and improve numerical stability in the tails of distributions. The following lemma justifies the analytic continuation of $\mathcal L_S$ and $\mathcal L_i^*$ at complex arguments, which is required for the Euler method.



\begin{lemma}\label{lem:complex-L}
Let $X_1,\dots,X_n$ be nonnegative random variables on $(\Omega,\mathcal F,\p)$. For $z\in\mathbb C$ with $\Re z>0$, define
$$\mathcal L_S(z):=\E\big[e^{-zS}\big],\qquad \mathcal L_i^*(z):=\E\big[X_i e^{-zS}\big],\qquad i=1,\dots,n.$$
Then the following holds.
\begin{enumerate}[label=\textnormal{(\roman*)}, leftmargin=3.2em]
\item The functions $\mathcal L_S$ and $\mathcal L_i^*$ are well-defined and holomorphic on $\{z\in\mathbb C:\Re z>0\}$.
\item For every integer $m\ge 0$ and every $\gamma>0$, the derivatives satisfy
$$\frac{\d^m}{\d z^m}\mathcal L_S(z)=\E\big[(-S)^m e^{-zS}\big],\qquad \frac{\d^m}{\d z^m}\mathcal L_i^*(z)=\E\big[(-S)^m X_i e^{-zS}\big],$$
for all $z$ with $\Re z\ge \gamma$, and the right-hand sides are finite and locally bounded uniformly on compact subsets of $\{\Re z\ge \gamma\}$.
\item If the joint LST $\mathcal L_{\bm X}(t_1,\dots,t_n)$ extends holomorphically to $\{(t_1,\dots,t_n)\in\mathbb C^n:\Re t_j>0\ \forall j\}$, then for every $z$ with $\Re z>0$,
$$\mathcal L_i^*(z)=-\left.\frac{\partial}{\partial t_i}\mathcal L_{\bm X}(t_1,\dots,t_n)\right|_{t_1=\cdots=t_n=z}.$$
\end{enumerate}
\end{lemma}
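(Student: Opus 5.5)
The plan is to reduce everything to elementary bounds on the integrands $e^{-zS}$ and $X_i e^{-zS}$ on half-planes $\{\Re z\ge\gamma\}$, and then invoke standard results on holomorphic parameter integrals. The basic estimates are as follows. Fix $\gamma>0$. For $\Re z\ge\gamma$ we have $|e^{-zS}|=e^{-(\Re z)S}\le e^{-\gamma S}\le 1$. Since $0\le X_i\le S$ and $\sup_{u\ge 0}u^{k}e^{-\gamma u}=(k/(\gamma e))^k$, we also get
$$|S^m e^{-zS}|\le \Big(\tfrac{m}{\gamma e}\Big)^m,\qquad |S^mX_ie^{-zS}|\le \Big(\tfrac{m+1}{\gamma e}\Big)^{m+1}.$$
These are deterministic constants, so all the expectations in (i) and (ii) are finite and bounded uniformly on $\{\Re z\ge\gamma\}$. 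This gives well-definedness in (i) and the finiteness and local boundedness in (ii). No moment assumption on $X_i$ is needed, which matters because $\E[X_i]$ may be infinite.

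Holomorphy in (i) follows from the standard theorem on holomorphic dependence of parameter integrals. For each $\omega$, the maps $z\mapsto e^{-zS(\omega)}$ and $z\mapsto X_i(\omega)e^{-zS(\omega)}$ are entire. They are jointly measurable and dominated on $\{\Re z>\gamma\}$ by the constants above. Continuity then follows by dominated convergence, and Morera's theorem with Fubini shows that the contour integral over any triangle vanishes. Since $\gamma>0$ is arbitrary, the functions are holomorphic on the whole open half-plane. For (ii), I will differentiate under the expectation by induction on $m$. The difference quotient of $e^{-zS}$ in the $z$-variable is bounded by $S\sup_{w\in[z,z+h]}|e^{-wS}|$. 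For $|h|<\gamma/2$ and $\Re z\ge\gamma$, this is at most $Se^{-\gamma S/2}$, which is bounded, so dominated convergence applies. The same argument works with the extra factor $(-S)^m$ or $X_i(-S)^m$, using the bound with $\gamma/2$ in place of $\gamma$. Alternatively, (ii) can be obtained from Cauchy's integral formula applied to the representation in (i), again interchanging the integrals by Fubini.

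For (iii), the joint LST is $\mathcal L_{\bm X}(t_1,\dots,t_n)=\E[e^{-\sum_j t_jX_j}]$, and it is holomorphic on the product of right half-planes by the same domination argument. The hypothesis states this explicitly, and I will interpret the extension as this expectation. The key step is to differentiate in $t_i$ under the expectation, with the other variables fixed at $z$. The difference quotient is bounded by $X_i\sup_{w}e^{-\Re w\,X_i}\,e^{-\Re z\,S_{-i}}\le X_ie^{-\gamma X_i/2}$, which is bounded. Dominated convergence then gives $\partial_{t_i}\mathcal L_{\bm X}=-\E[X_ie^{-\sum_j t_jX_j}]$, and evaluating on the diagonal yields $-\mathcal L_i^*(z)$.

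A subtle point arises if the hypothesis is read as saying only that some holomorphic extension exists, not necessarily given by the expectation formula. In that case I would argue by uniqueness. The extension agrees with the expectation on $(0,\infty)^n$, and two functions holomorphic on a connected domain that agree on this totally real set are identical by the identity theorem in several variables. This identification is the only step that needs care. The rest of the argument is routine domination and does not rely on Theorem \ref{thm:main-theorem}(i), although it recovers that result on the real axis.
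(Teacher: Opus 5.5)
Your proposal is correct and follows essentially the same route as the paper: deterministic bounds $|S^m e^{-zS}|\le \sup_{x\ge 0}x^m e^{-\gamma x}$ on $\{\Re z\ge\gamma\}$ (with $X_i\le S$ for $\mathcal L_i^*$), dominated convergence for the complex difference quotients, and the same domination argument for $\partial_{t_i}\mathcal L_{\bm X}$ in (iii). The $\gamma/2$ margin on the difference quotients, the Morera alternative, and the identity-theorem remark for an abstractly given extension are small refinements of points the paper leaves implicit, not a different argument.
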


A common Laplace inversion strategy is to perform the inversion along a Bromwich line in the complex plane. Let $f$ be of exponential order with Laplace transform $\hat f(z)=\int_0^\infty e^{-zs}f(s)\d s$ defined for $\Re z>\sigma_0$. The Bromwich inversion formula is
$$f(s)=\frac{1}{2\pi i}\int_{\gamma-i\infty}^{\gamma+i\infty}e^{zs}\hat f(z)\d z,\qquad \gamma>\sigma_0.$$
Fourier-series methods approximate this integral by evaluating $\hat f$ at complex points on the vertical line $\Re z=\gamma$. In the Euler method of \cite{abate2006unified}, one chooses $A>0$, sets $\gamma=A/(2s)$, and evaluates at
$$z_k(s)=\frac{A+2\pi i k}{2s},\qquad k=0,1,2,\dots.$$
Define the alternating partial sum with $N$ retained complex terms by
$$S_N(s):=\frac{e^{A/2}}{s}\left(\tfrac12\Re\{\hat f(z_0(s))\}+\sum_{k=1}^{N}(-1)^k\Re\{\hat f(z_k(s))\}\right),$$
Applying Euler acceleration of order $m$ to the sequence $(S_N(s))_{N\ge 0}$ gives the approximation
$$f^{\mathrm{Eul}}_{N,m}(s):=\sum_{r=0}^{m}\binom{m}{r}2^{-m}S_{N+r}(s),$$
where $N$ controls truncation and $m$ controls binomial smoothing; see \cite{abate2006unified} for details. To compute the CMRS, the same complex node set $\{z_k(s):k=0,\dots,N+m\}$ can be used for all inversions at a fixed $s$, allowing $\mathcal L_S$ and all $\mathcal L_i^*$ to be evaluated once.

\subsection{Tuning the inversion}\label{subsec:tuning-inversion}

Both methods involve a trade-off between truncation error and round-off error, and the budget-balance identity \eqref{eq:budget-balance-density} provides a diagnostic that does not depend on the true target functions and can be used to tune the inversion parameters. Inversion parameters should be increased until these identities hold to the required tolerance.

For Gaver--Stehfest, accuracy improves with the order $M$ for smooth targets, but the weights \eqref{eq:GS-weights-sec4} grow rapidly and alternate in sign, so round-off error becomes dominant for large values of $M$. Following \cite{abate2004multi}, achieving $j$ significant digits requires $M\approx 1.1j$ terms computed with approximately $2M$ digits of precision. Double precision restricts $M$ to roughly $8$--$10$; higher accuracy requires extended precision arithmetic (and therefore more computational time). Since $\zeta_k$ involves large combinatorial factors, the binomial coefficients should be assembled in exact or high-precision arithmetic to avoid overflow. The weights depend only on $M$ and are model-free, so they can be computed once and saved for repeated use across different models and $s$-grids.

For the Euler method, the parameters $A$, $N$, and $m$ should be increased until the diagnostic identities stabilize across the $s$-grid \citep{abate2006unified}. 

\section{Tilting for rare-event CMRS}\label{sec:tilting}

\subsection{Esscher tilting and shift identities}

For large values of $s$, the density $f_S(s)$ of the aggregate loss 
and the allocation density $\xi_i(s)$ of the expected allocation measure $\nu_i$ can fall below the representable range of double-precision arithmetic. Numerical inversion then underflows or suffers large relative error, and the ratio $h_i(s)=\xi_i(s)/f_S(s)$ can 
become unstable, see \cite[Section 4]{blier-wong2025efficient} for examples and discussions. Exponential tilting \citep{esscher1932probability} is a standard device when dealing with the computation of tail probabilities and tail expectations on the transform level; see \cite{grubel1999computation,embrechts1993applications, denuit2022mortality,denuit2022conditional} for applications in risk aggregation and risk-sharing. 

Tilting multiplies the density and allocation by a common factor before inversion; this tilt rescales the target functions to mitigate numerical errors. The common factor then cancels in the ratio, so tilting does not change the target CMRS allocation $h_i(s)$ but allows it to be computed accurately. Up-tilting ($\theta > 0$) rescales the tail upwards to mitigate underflow, while down-tilting ($\theta < 0$) rescales the tail downwards to mitigate overflow. In practice, underflow is the dominant issue in tail CMRS computations, so the remainder of this section focuses on $\theta>0$.

\subsection{Tilting in the CMRS setting}

We now apply tilting to reduce tail underflow in the CMRS ratio. For $\theta>0$, define the tilted densities
$$\xi_i^{(\theta)}(s):=e^{\theta s}\xi_i(s),\qquad f_S^{(\theta)}(s):=e^{\theta s}f_S(s).$$
For $\Re t>\theta$,
$$\mathcal L\{\xi_i^{(\theta)}\}(t)=\mathcal L_i^*(t-\theta),\qquad \mathcal L\{f_S^{(\theta)}\}(t)=\mathcal L_S(t-\theta).$$
The CMRS ratio is tilt-invariant, since for $s\ge 0$,
\begin{equation}\label{eq:tilt-invariant-cmrs}
h_i(s)=\frac{\xi_i^{(\theta)}(s)}{f_S^{(\theta)}(s)}=\frac{\xi_i(s)}{f_S(s)}.
\end{equation}

Up-tilting replaces $\mathcal L_S(t)$ and $\mathcal L_i^*(t)$ with $\mathcal L_S(t-\theta)$ and $\mathcal L_i^*(t-\theta)$, where $\theta>0$. Gaver--Stehfest evaluates the transform at a set of real nodes $t_k>0$; with up-tilting, these nodes shift to $t_k-\theta$, which can be negative when $t_k<\theta$. Using Gaver--Stehfest with positive tilting, therefore, requires the transform to be defined and convergent on $(-\theta,\infty)$. For the Laplace--Stieltjes transforms considered here, the domain of convergence is $(0,\infty)$, so this condition fails: Gaver--Stehfest cannot be combined with positive tilting unless the transform admits an analytic continuation to $(-\theta,0]$, which is not the case in general. Complex-plane inversion avoids this difficulty. By choosing a Bromwich contour at $\Re z=\gamma>\theta$, all shifted evaluations $z-\theta$ remain in the right half-plane $\{\Re z>0\}$ where the transforms are analytic. For this reason, tilting is straightforward to implement only with complex-plane methods such as the Euler method, and we will apply tilting to the Euler method in the numerical examples that follow.

\subsection{Numerical implementation}\label{subsec:numerical-implementation}

We illustrate the numerical inversion methods in a setting with exact
expressions for the conditional means, so that the numerical approximations can be compared against the closed-form solution. 

\subsubsection{Common-shock compound Poisson continuation and scaling}\label{subsubsec:cp-continuation-scaling}
We continue Example \ref{ex:common-shock-cp} with a pool of common-shock compound Poisson portfolio with $n = 3$ lines of business. The parameter set is $\lambda_0=1.5$, $(\lambda_1,\lambda_2,\lambda_3)=(0.8,1.1,0.6)$, $\beta_0=0.9$, $(\beta_1,\beta_2,\beta_3)=(1.4,0.7,1.9)$, and $(p_1,p_2,p_3)=(0.2,0.3,0.5)$, evaluated on a grid $s\in[0.1,75]$ with step $0.1$. We use GS inversion ($M=10$) and Euler inversion ($N=25,m=15$) with and without up-tilting ($\theta=0.2$).

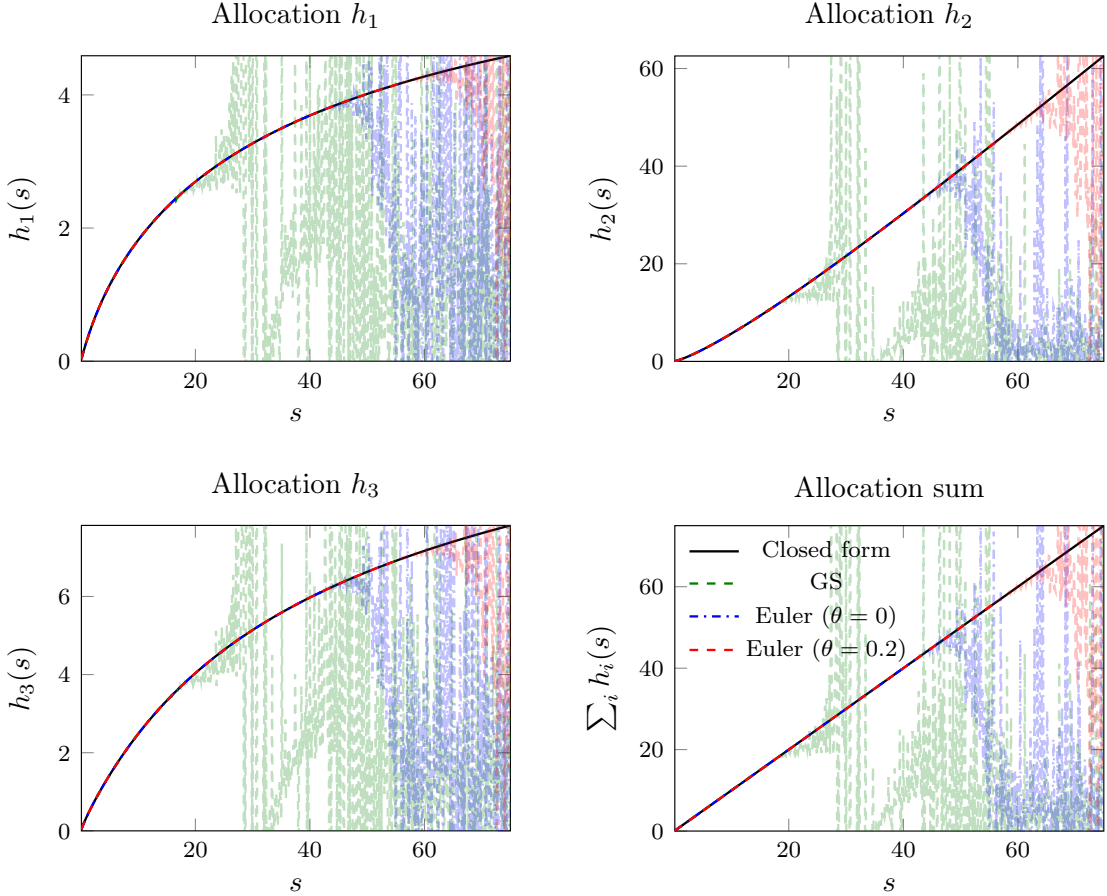
\begin{figure}[!ht]
\centering
\resizebox{0.90\textwidth}{!}{%
\begin{tikzpicture}
\begin{groupplot}[
  group style={group size=2 by 2, horizontal sep=2.0cm, vertical sep=2.0cm},
  width=6.8cm,
  height=5.3cm,
  xmin=0.1, xmax=75,
  xlabel={$s$},
  tick label style={font=\scriptsize},
  label style={font=\small},
  title style={font=\small}
]
\nextgroupplot[
  ylabel={$h_1(s)$},
  title={Allocation $h_1$},
  ymin=0,
  ymax=4.58817061673245
]
\addplot+[black, thick, no marks] table[x=s, y=h_closed_1, col sep=comma]
  {output_common_shock_compound_poisson/allocations_compound_poisson_closed_vs_methods_by_s.csv};
\addplot+[green!50!black, thick, dashed, no marks, restrict x to domain=0.1:17] table[x=s, y=h_gs_1, col sep=comma]
  {output_common_shock_compound_poisson/allocations_compound_poisson_closed_vs_methods_by_s.csv};
\addplot+[green!50!black, thick, dashed, no marks, opacity=0.25, restrict x to domain=17:75, forget plot] table[x=s, y=h_gs_1, col sep=comma]
  {output_common_shock_compound_poisson/allocations_compound_poisson_closed_vs_methods_by_s.csv};
\addplot+[blue, thick, dashdotted, no marks, restrict x to domain=0.1:42] table[x=s, y=h_complex_untilted_1, col sep=comma]
  {output_common_shock_compound_poisson/allocations_compound_poisson_closed_vs_methods_by_s.csv};
\addplot+[blue, thick, dashdotted, no marks, opacity=0.25, restrict x to domain=42:75, forget plot] table[x=s, y=h_complex_untilted_1, col sep=comma]
  {output_common_shock_compound_poisson/allocations_compound_poisson_closed_vs_methods_by_s.csv};
\addplot+[red, thick, dashed, no marks, restrict x to domain=0.1:56] table[x=s, y=h_complex_tilt_1, col sep=comma]
  {output_common_shock_compound_poisson/allocations_compound_poisson_closed_vs_methods_by_s.csv};
\addplot+[red, thick, dashed, no marks, opacity=0.25, restrict x to domain=56:75, forget plot] table[x=s, y=h_complex_tilt_1, col sep=comma]
  {output_common_shock_compound_poisson/allocations_compound_poisson_closed_vs_methods_by_s.csv};

\nextgroupplot[
  ylabel={$h_2(s)$},
  title={Allocation $h_2$},
  ymin=0,
  ymax=62.5787319955666
]
\addplot+[black, thick, no marks] table[x=s, y=h_closed_2, col sep=comma]
  {output_common_shock_compound_poisson/allocations_compound_poisson_closed_vs_methods_by_s.csv};
\addplot+[green!50!black, thick, dashed, no marks, restrict x to domain=0.1:17] table[x=s, y=h_gs_2, col sep=comma]
  {output_common_shock_compound_poisson/allocations_compound_poisson_closed_vs_methods_by_s.csv};
\addplot+[green!50!black, thick, dashed, no marks, opacity=0.25, restrict x to domain=17:75, forget plot] table[x=s, y=h_gs_2, col sep=comma]
  {output_common_shock_compound_poisson/allocations_compound_poisson_closed_vs_methods_by_s.csv};
\addplot+[blue, thick, dashdotted, no marks, restrict x to domain=0.1:42] table[x=s, y=h_complex_untilted_2, col sep=comma]
  {output_common_shock_compound_poisson/allocations_compound_poisson_closed_vs_methods_by_s.csv};
\addplot+[blue, thick, dashdotted, no marks, opacity=0.25, restrict x to domain=42:75, forget plot] table[x=s, y=h_complex_untilted_2, col sep=comma]
  {output_common_shock_compound_poisson/allocations_compound_poisson_closed_vs_methods_by_s.csv};
\addplot+[red, thick, dashed, no marks, restrict x to domain=0.1:56] table[x=s, y=h_complex_tilt_2, col sep=comma]
  {output_common_shock_compound_poisson/allocations_compound_poisson_closed_vs_methods_by_s.csv};
\addplot+[red, thick, dashed, no marks, opacity=0.25, restrict x to domain=56:75, forget plot] table[x=s, y=h_complex_tilt_2, col sep=comma]
  {output_common_shock_compound_poisson/allocations_compound_poisson_closed_vs_methods_by_s.csv};

\nextgroupplot[
  ylabel={$h_3(s)$},
  title={Allocation $h_3$},
  ymin=0,
  ymax=7.81871139181667
]
\addplot+[black, thick, no marks] table[x=s, y=h_closed_3, col sep=comma]
  {output_common_shock_compound_poisson/allocations_compound_poisson_closed_vs_methods_by_s.csv};
\addplot+[green!50!black, thick, dashed, no marks, restrict x to domain=0.1:17] table[x=s, y=h_gs_3, col sep=comma]
  {output_common_shock_compound_poisson/allocations_compound_poisson_closed_vs_methods_by_s.csv};
\addplot+[green!50!black, thick, dashed, no marks, opacity=0.25, restrict x to domain=17:75, forget plot] table[x=s, y=h_gs_3, col sep=comma]
  {output_common_shock_compound_poisson/allocations_compound_poisson_closed_vs_methods_by_s.csv};
\addplot+[blue, thick, dashdotted, no marks, restrict x to domain=0.1:42] table[x=s, y=h_complex_untilted_3, col sep=comma]
  {output_common_shock_compound_poisson/allocations_compound_poisson_closed_vs_methods_by_s.csv};
\addplot+[blue, thick, dashdotted, no marks, opacity=0.25, restrict x to domain=42:75, forget plot] table[x=s, y=h_complex_untilted_3, col sep=comma]
  {output_common_shock_compound_poisson/allocations_compound_poisson_closed_vs_methods_by_s.csv};
\addplot+[red, thick, dashed, no marks, restrict x to domain=0.1:56] table[x=s, y=h_complex_tilt_3, col sep=comma]
  {output_common_shock_compound_poisson/allocations_compound_poisson_closed_vs_methods_by_s.csv};
\addplot+[red, thick, dashed, no marks, opacity=0.25, restrict x to domain=56:75, forget plot] table[x=s, y=h_complex_tilt_3, col sep=comma]
  {output_common_shock_compound_poisson/allocations_compound_poisson_closed_vs_methods_by_s.csv};

\nextgroupplot[
  ylabel={$\sum_i h_i(s)$},
  title={Allocation sum},
  ymin=0,
  ymax=75,
  legend style={font=\scriptsize, draw=none, fill=none, at={(0.01,0.99)}, anchor=north west}
]
\addplot+[black, thick, no marks] table[x=s, y=sum_h_closed, col sep=comma]
  {output_common_shock_compound_poisson/allocations_compound_poisson_closed_vs_methods_by_s.csv};
\addplot+[green!50!black, thick, dashed, no marks, restrict x to domain=0.1:17] table[x=s, y=sum_h_gs, col sep=comma]
  {output_common_shock_compound_poisson/allocations_compound_poisson_closed_vs_methods_by_s.csv};
\addplot+[green!50!black, thick, dashed, no marks, opacity=0.25, restrict x to domain=17:75, forget plot] table[x=s, y=sum_h_gs, col sep=comma]
  {output_common_shock_compound_poisson/allocations_compound_poisson_closed_vs_methods_by_s.csv};
\addplot+[blue, thick, dashdotted, no marks, restrict x to domain=0.1:42] table[x=s, y=sum_h_complex_untilted, col sep=comma]
  {output_common_shock_compound_poisson/allocations_compound_poisson_closed_vs_methods_by_s.csv};
\addplot+[blue, thick, dashdotted, no marks, opacity=0.25, restrict x to domain=42:75, forget plot] table[x=s, y=sum_h_complex_untilted, col sep=comma]
  {output_common_shock_compound_poisson/allocations_compound_poisson_closed_vs_methods_by_s.csv};
\addplot+[red, thick, dashed, no marks, restrict x to domain=0.1:56] table[x=s, y=sum_h_complex_tilt, col sep=comma]
  {output_common_shock_compound_poisson/allocations_compound_poisson_closed_vs_methods_by_s.csv};
\addplot+[red, thick, dashed, no marks, opacity=0.25, restrict x to domain=56:75, forget plot] table[x=s, y=sum_h_complex_tilt, col sep=comma]
  {output_common_shock_compound_poisson/allocations_compound_poisson_closed_vs_methods_by_s.csv};
\legend{Closed form, GS, Euler ($\theta=0$), Euler ($\theta=0.2$)}
\end{groupplot}
\end{tikzpicture}
}
\caption{Common-shock compound Poisson: closed-form benchmark, GS, and Euler inversion with $\theta=0.2$ and $\theta=0$. Note that we show the curves as faded once the allocation-sum diagnostic breaks down, which occurs at different values of $s$ across methods. }
\label{fig:compound-poisson-complex-inversion}
\end{figure}

Figure \ref{fig:compound-poisson-complex-inversion} shows the allocation curves and the allocation-sum diagnostic for the three methods. The GS method performs well for small values of $s$ but breaks down in the tail, where the allocation sum deviates significantly from $s$. The Euler method also suffers from underflow in the tail, with the untilted version breaking down earlier than the tilted version. 

The baseline runtime comparison for this $n=3$ setting is shown in Table \ref{tab:compound-poisson-runtime}. While the exact method is the most accurate, it is also the slowest by a wide margin, as it requires evaluating an infinite sum (truncated to a large number of terms) for each $s$. A large truncation point is needed to capture the tail behaviour, which is important for accurate CMRS allocations at large $s$. The computation is exponential in $n$ because the number of terms grows exponentially with $n$, making the exact method infeasible for larger portfolios. The GS and Euler methods are much faster, with the GS method being slightly faster in this example. Tilting adds a small overhead to the Euler method.
\begin{table}[!ht]
\centering
\small
\begin{tabular}{lr}
\toprule
Method & Runtime (s) \\
\midrule
Closed-form series       & 27.643 \\
GS inversion             & 0.0440 \\
Euler inversion ($\theta=0$) & 0.0880 \\
Euler inversion ($\theta=0.2$) & 0.1050 \\
\bottomrule
\end{tabular}
\caption{Runtime summary for the common-shock compound Poisson with $n = 3$.}
\label{tab:compound-poisson-runtime}
\end{table}

Figure \ref{fig:compound-poisson-runtime-scaling} compares the runtime scaling of the GS and Euler methods as the number of participants $n$ increases. The runtime is measured on a grid of $s$ values, and the mean runtime from 100 replications is plotted against $n$ on a log-log scale. The GS method scales better than the Euler method by a small constant factor, but both methods exhibit similar scaling behaviour as $n$ increases. The tilting parameter $\theta=0.2$ does not significantly affect the runtime compared to the untilted version, indicating that the overhead from tilting is negligible in this setting. Note that we do not include the closed-form method in this scaling comparison because it becomes infeasible for larger $n$ due to its exponential runtime (even for $n = 5$). Although a closed-form benchmark is unavailable beyond small $n$, the budget-balance diagnostic \eqref{eq:budget-balance-density} remains computable at any dimension and provides a model-free accuracy check. In our experiments, with adequate tuning of the inversion parameters as described in Section \ref{subsec:tuning-inversion}, the identity $\sum_i \xi_i(s)\approx s f_S(s)$ continues to hold to high precision in the body of the distribution of $S$ across the full range of $n$ tested, indicating that the numerical approximations remain accurate as $n$ increases.

\begin{figure}[!ht]
\centering
\resizebox{0.75\textwidth}{!}{%
\begin{tikzpicture}
\begin{axis}[
  width=12.0cm,
  height=7.0cm,
  xmode=log,
  ymode=log,
  log basis x=10,
  log basis y=10,
  xmin=4, xmax=120000,
  ymin=0.08, ymax=700,
  xlabel={Number of participants $n$ (log scale)},
  ylabel={Mean runtime (seconds)},
  xtick={5,10,20,50,100,1000,10000,100000},
  xticklabels={5,10,20,50,100,1000,10000,100000},
  ytick={0.1,1,10,100,500},
  yticklabels={0.1,1,10,100,500},
  scaled x ticks=false,
  scaled y ticks=false,
  tick label style={font=\scriptsize},
  label style={font=\small},
  legend pos=north west,
  legend style={font=\scriptsize, draw=none, fill=none}
]
\addplot+[green!50!black, thick, mark=o, mark size=1.8pt] coordinates {
  (5,0.0990)
  (10,0.1086)
  (20,0.1187)
  (50,0.1700)
  (100,0.2435)
  (1000,1.5848)
  (10000,14.7500)
  (100000,171.8400)
};
\addplot+[blue, thick, dashdotted, mark=square*, mark size=1.8pt, mark options={draw=blue, fill=blue}] coordinates {
  (5,0.1352)
  (10,0.1515)
  (20,0.2004)
  (50,0.3099)
  (100,0.5177)
  (1000,3.8738)
  (10000,42.6825)
  (100000,496.5750)
};
\addplot+[red, thick, dashed, mark=triangle*, mark size=2.0pt, mark options={draw=red, fill=red}] coordinates {
  (5,0.1325)
  (10,0.1668)
  (20,0.1985)
  (50,0.3131)
  (100,0.5162)
  (1000,4.0035)
  (10000,40.1275)
  (100000,477.0900)
};
\legend{GS untilted, Euler ($\theta=0$), Euler ($\theta=0.2$)}
\end{axis}
\end{tikzpicture}
}
\caption{Scaling of full-grid mean runtime (seconds) under the same runtime basis as Table \ref{tab:compound-poisson-runtime}.}
\label{fig:compound-poisson-runtime-scaling}
\end{figure}
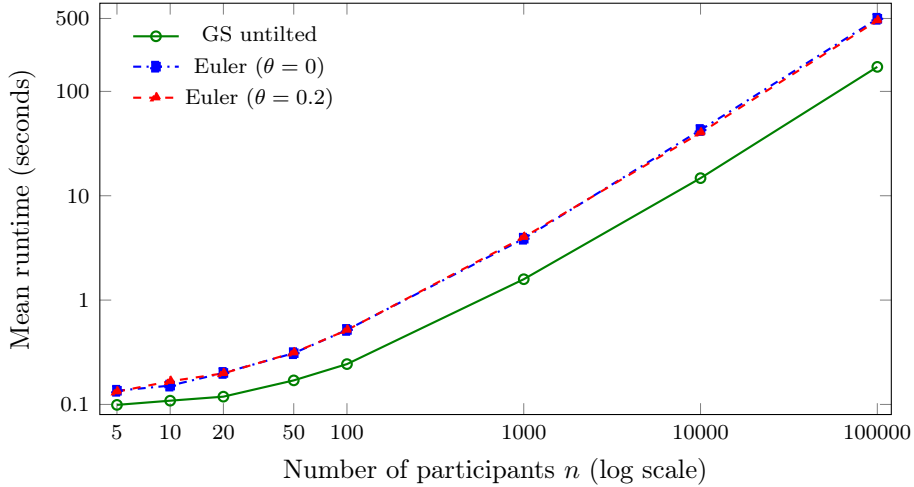

\subsubsection{Independent lognormal implementation}\label{ex:indep-lognormal-implementation}
Consider $n=3$ independent lognormal risks $X_i\sim\mathrm{LN}(\mu_i,\sigma_i^2)$, with parameters set such that the marginal means are $(1,2,2)$ and variances are $(5,2,5)$, with aggregate loss $S=\sum_{i=1}^3 X_i$.
For $t>0$, set
$$\mathcal L_S(t)=\prod_{j=1}^3 \E[e^{-tX_j}],\qquad \mathcal L_i^*(t)=\E[X_i e^{-tX_i}] \prod_{j\neq i}\E[e^{-tX_j}].$$
In contrast to Example \ref{ex:common-shock-cp}, where an exact closed-form benchmark is available, no closed-form benchmark exists here; we compute the CMRS numerically with the Gauss--Hermite quadrature and Laplace inversion as in \citep{furman2020lognormal}.

\begin{figure}[!ht]
\centering
\resizebox{0.75\textwidth}{!}{%
\begin{tikzpicture}
\begin{groupplot}[
  group style={group size=2 by 2, horizontal sep=2.0cm, vertical sep=2.4cm},
  width=5.8cm,
  height=5.8cm,
  xmin=0.5, xmax=25,
  xlabel={$s$},
  grid=both,
  tick label style={font=\scriptsize},
  label style={font=\small},
  title style={font=\small}
]
\nextgroupplot[
  ylabel={$h_1(s)$},
  title={Allocation $h_1$},
  legend pos=north west,
  legend style={font=\scriptsize, draw=none, fill=none}
]
\addplot+[green!50!black, thick, no marks] table[x=s, y=h_gs_1, col sep=comma]
  {output_lognormal/allocations_lognormal_complex_by_s.csv};
\addplot+[blue, thick, dashdotted, no marks] table[x=s, y=h_complex_untilted_1, col sep=comma]
  {output_lognormal/allocations_lognormal_complex_by_s.csv};
\addplot+[red, thick, dashed, no marks] table[x=s, y=h_complex_tilt_1, col sep=comma]
  {output_lognormal/allocations_lognormal_complex_by_s.csv};
\legend{GS, Euler ($\theta=0$), Euler ($\theta=0.8$)}

\nextgroupplot[
  ylabel={$h_2(s)$},
  title={Allocation $h_2$}
]
\addplot+[green!50!black, thick, no marks] table[x=s, y=h_gs_2, col sep=comma]
  {output_lognormal/allocations_lognormal_complex_by_s.csv};
\addplot+[blue, thick, dashdotted, no marks] table[x=s, y=h_complex_untilted_2, col sep=comma]
  {output_lognormal/allocations_lognormal_complex_by_s.csv};
\addplot+[red, thick, dashed, no marks] table[x=s, y=h_complex_tilt_2, col sep=comma]
  {output_lognormal/allocations_lognormal_complex_by_s.csv};

\nextgroupplot[
  ylabel={$h_3(s)$},
  title={Allocation $h_3$}
]
\addplot+[green!50!black, thick, no marks] table[x=s, y=h_gs_3, col sep=comma]
  {output_lognormal/allocations_lognormal_complex_by_s.csv};
\addplot+[blue, thick, dashdotted, no marks] table[x=s, y=h_complex_untilted_3, col sep=comma]
  {output_lognormal/allocations_lognormal_complex_by_s.csv};
\addplot+[red, thick, dashed, no marks] table[x=s, y=h_complex_tilt_3, col sep=comma]
  {output_lognormal/allocations_lognormal_complex_by_s.csv};

\nextgroupplot[
  ylabel={$\sum_i h_i(s)$},
  title={Allocation Sum},
  legend pos=north west,
  legend style={font=\scriptsize, draw=none, fill=none}
]
\addplot+[green!50!black, thick, no marks] table[x=s, y=sum_h_gs, col sep=comma]
  {output_lognormal/allocations_lognormal_complex_by_s.csv};
\addplot+[blue, thick, dashdotted, no marks] table[x=s, y=sum_h_complex_untilted, col sep=comma]
  {output_lognormal/allocations_lognormal_complex_by_s.csv};
\addplot+[red, thick, dashed, no marks] table[x=s, y=sum_h_complex_tilt, col sep=comma]
  {output_lognormal/allocations_lognormal_complex_by_s.csv};
\addplot+[gray!70!black, thick, dotted, no marks] table[x=s, y=s, col sep=comma]
  {output_lognormal/allocations_lognormal_complex_by_s.csv};
\legend{GS, Euler ($\theta=0$), Euler ($\theta=0.8$), $y=s$}
\end{groupplot}
\end{tikzpicture}
}
\caption{Independent lognormals, $s\in[0.5,25]$: GS (untilted) versus Euler inversion with $\theta=0.8$ and $\theta=0$. Panels show the three component allocations $h_i(s)$ and the allocation-sum diagnostic $\sum_i h_i(s)$ against the identity $y=s$.}
\label{fig:lognormal-complex-inversion}
\end{figure}
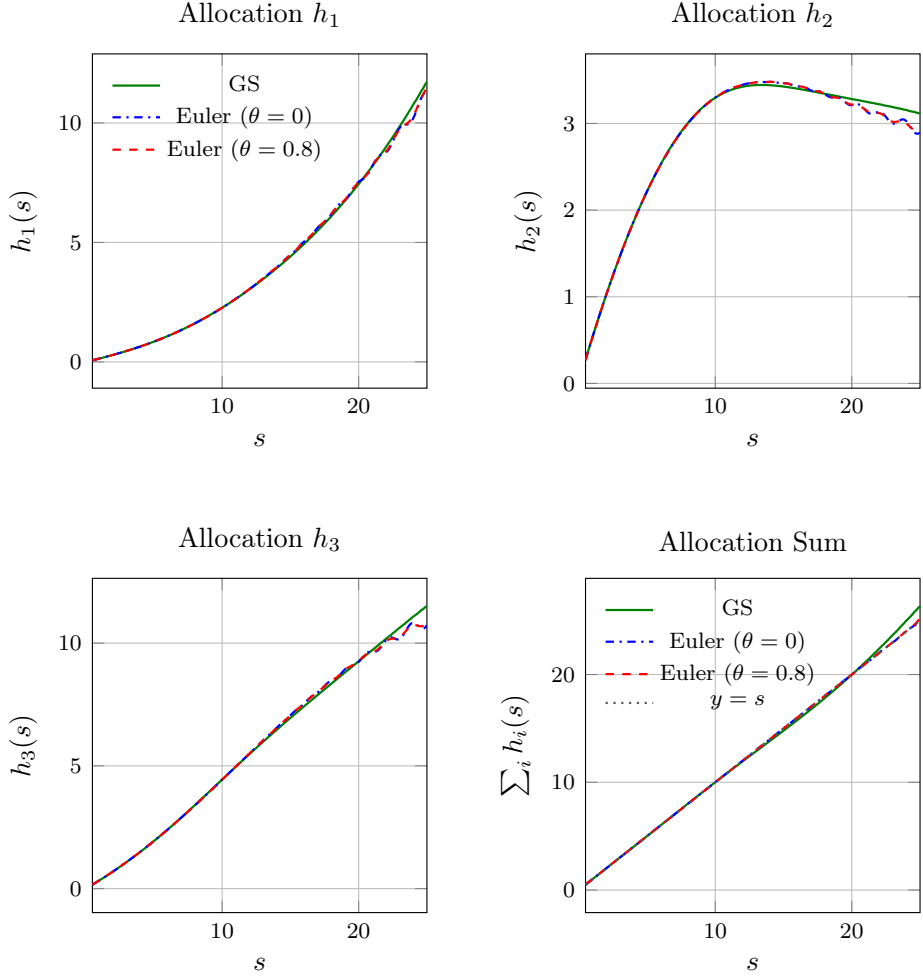

Figure \ref{fig:lognormal-complex-inversion} compares GS (untilted), Euler (untilted), and Euler with up-tilting. In the body of the distribution, all three methods agree closely. In the tail, the GS method remains smooth, but the sum of allocations deviates from the budget-balance identity, indicating numerical instability. The untilted and tilted Euler methods both exhibit some deterioration in the tail; in this case, tilting slightly improves stability, but the effect is less pronounced than in the exponential case. 

\section{Conclusion}\label{sec:conclusion}

In this paper, we develop a general framework for computing conditional mean risk-sharing (CMRS) allocations in aggregate loss models. The key insight is that the CMRS allocation $h_i(s)=\E[X_i\mid S=s]$ can be expressed in terms of the Laplace transforms $\mathcal L_S(t)=\E[e^{-tS}]$ and $\mathcal L_i^*(t)=\E[X_i e^{-tS}]$.

The main identity $\mathcal L_i^*(t)=-\partial_{t_i}\mathcal L_{\bm X}(t,\dots,t)$ reduces CMRS computation to evaluating $\mathcal L_S$ and $\mathcal L_i^*$ and performing one-dimensional Laplace inversions. The allocation-measure formulation $\nu_i(B)=\E[X_i\id{S\in B}]$ provides the rigorous foundation for the CMRS definition, characterizes the conditional means as Radon--Nikodym derivatives, and yields the diagnostic identity $\sum_i\xi_i(s)=sf_S(s)$, a model-independent check for numerical implementations.

For families with rational LSTs, including mixed exponentials and matrix-exponential margins, this yields closed-form CMRS expressions. For the general case, the approach is to compute $\mathcal L_S$ and $\mathcal L_i^*$ on a suitable grid of $t$-values, apply tilting to improve numerical stability, and then perform the numerical inversion using Gaver--Stehfest or Euler methods. The approach applies whenever $\mathcal L_S$ and $\mathcal L_i^*$ can be evaluated, including lognormal portfolios (via quadrature \cite{furman2020lognormal}) and models admitting saddlepoint or Padé approximations \citep{ramsay2008distribution,asmussen2016laplace}.

We concede that Theorem \ref{thm:main-theorem} is, at its core, a straightforward consequence of differentiation under the integral sign and the Radon--Nikodym theorem; neither the identity $L_i^*(t) = -\partial_{t_i} L_{\mathbf{X}}(t,\ldots,t)$ nor the measure-theoretic formulation of CMRS requires deep probabilistic machinery. The principal contribution of this paper is the computational framework it enables. By reducing CMRS evaluation to one-dimensional Laplace inversions of $L_S$ and $L_i^*$, the approach eliminates the multidimensional integration that makes direct density-based computation infeasible beyond small $n$. This shift from a density-level calculation to a transform-level one is what permits scaling to large portfolios: the examples in Section \ref{subsec:numerical-implementation} demonstrate that portfolios with $n = 100\,000$ participants can be handled routinely, whereas the closed-form series expansion is already impractical for $n = 5$, even when explicit formulas are available for the CMRS allocations. 

The conditional mean $s\mapsto h_i(s)=\E[X_i\mid S=s]$ also connects directly to capital allocation. The Euler-based allocation principle is a widely used approach to capital allocation. For a positively homogeneous risk measure $\rho(S)$ and a decomposition $S=\sum_i X_i$, the contribution of line $i$ is the directional derivative
$$C_i^{\rho}:=\left.\frac{\partial}{\partial\varepsilon}\rho \left(S+\varepsilon X_i\right)\right|_{\varepsilon=0},\qquad \sum_{i=1}^n C_i^{\rho}=\rho(S),$$
whenever the derivative exists. For quantile-based capital $\rho(S)=\mathrm{VaR}_q(S)$, and in the continuous case, the Euler contribution reduces to a conditional mean at the capital threshold:
$$C_i^{\mathrm{VaR}_q}=\E \left[X_i\mid S=\mathrm{VaR}_q(S)\right]=h_i \left(\mathrm{VaR}_q(S)\right),$$
which we are already equipped to compute using the transform-based approach developed in this paper. For tail-based capital such as the tail value-at-risk $\mathrm{TVaR}_q$, the Euler contribution is the tail-conditional mean
$$C_i^{\mathrm{TVaR}_q}=\E \left[X_i\mid S\ge \mathrm{VaR}_q(S)\right]=\frac{1}{\p \left(S\ge \mathrm{VaR}_q(S)\right)}\int_{\mathrm{VaR}_q(S)}^{\infty} h_i(s) f_S(s)\d s,$$
when $S$ has a density $f_S$. The integrand is exactly the allocation measure density $\xi_i(s)=h_i(s)f_S(s)$.

\section*{Acknowledgements}

The author acknowledges financial support from the Natural Sciences and Engineering Research Council of Canada (RGPIN-2025-06879).

\appendix

\section{Omitted proofs}\label{sec:proofs}

\begin{proof}[Proof of Theorem \ref{thm:main-theorem}]
  For part (i), fix $(t_1,\dots,t_n)\in[0,\infty)^n$. If $t_i>0$, take an $h$ such that $|h|<t_i/2$, then $t_i+h>0$. It follows that
$$\frac{\mathcal L_{\bm X}(t_1,\dots,t_i+h,\dots,t_n)-\mathcal L_{\bm X}(t_1,\dots,t_i,\dots,t_n)}{h} =\E\left[e^{-\sum_{j=1}^n t_j X_j}\frac{e^{-hX_i}-1}{h}\right].$$
We further have that $\frac{e^{-hx}-1}{h}=-x\int_0^1 e^{-hux}\d u$, from which we obtain the bound
$$\left|e^{-t_i x}\frac{e^{-hx}-1}{h}\right|\le x e^{-(t_i-|h|)x}\le x e^{-(t_i/2)x}\le \frac{2}{e t_i}.$$
By the dominated convergence theorem \citep[Chapter 13]{feller1957introduction}, the derivative in \eqref{eq:partial-derivative} holds and exists two-sided. If $t_i=0$, take $h>0$ and note that $(1-e^{-hx})/h\uparrow x$ as $h\downarrow 0$ for each $x\ge 0$.
By the monotone convergence theorem, the right derivative exists (possibly $-\infty$) and equals the right-hand side of \eqref{eq:partial-derivative} with $t_i=0$. At $(t_1,\dots,t_n)=(0,\dots,0)$ the derivative is finite if and only if $\E[X_i]<\infty$. Evaluating on the diagonal $t_1=\cdots=t_n=t$ gives \eqref{eq:Li-star-expectation}; at $t=0$ this leads to $\mathcal L_i^*(0)=\E[X_i]$, which may be infinite. For $t>0$, we have
$X_i e^{-tS}\le Se^{-tS}$ and $\sup_{x\ge 0} x e^{-t x}<\infty$ so $\mathcal L_i^*(t)<\infty$ for each $t>0$.

For part (ii), first note that the measure $\nu_i$ is $\sigma$-finite and absolutely continuous with respect to $\mu_S$. To show this, consider the case $\mu_S(B)=0$, then $\id{S\in B}=0$ almost surely, hence $\nu_i(B)=\E[X_i\id{S\in B}]=0$ and therefore $\nu_i\ll \mu_S$.
By the Radon--Nikodym theorem \citep[Section 32]{billingsley1995probability}, there exists a measurable $m_i:[0,\infty)\to[0,\infty)$ such that
\begin{equation}\label{eq:nu-rn}
\nu_i(B)=\int_{[0,\infty)} \id{s\in B}m_i(s)\mu_S(\d s),\qquad B\in\mathcal B([0,\infty)).
\end{equation}
For any bounded nonnegative measurable function $\varphi$,
$$ \E[\varphi(S)X_i]=\int_{[0,\infty)} \varphi(s)\nu_i(\d s) =\int_{[0,\infty)} \varphi(s)m_i(s)\mu_S(\d s) =\E[\varphi(S)m_i(S)].$$
This characterizes $\E[X_i\mid \sigma(S)]$ for $X_i\ge 0$. If in addition $X_i\in L^1$, the same identity holds for bounded signed measurable $\varphi$. In particular, since $0\le X_i\le S$, we have $0\le m_i(s)\le s$ for $\mu_S$-almost every $s$; hence the conditional mean is finite pointwise even when $\E[X_i]=\infty$.

For part (iii), if $\mu_S(\d s)=f_S(s) \d s$, then \eqref{eq:nu-rn} becomes
	$\nu_i(B)=\int_B m_i(s)f_S(s) \d s$, so $\xi_i(s) = m_i(s)f_S(s)$ for Lebesgue-almost every $s$, which corresponds to \eqref{eq:xi-factorization}. Then, \eqref{eq:Li-star-laplace} follows from \eqref{eq:Li-star-expectation} and the definition of $\nu_i$.
\end{proof}

\begin{proof}[Proof of Corollary \ref{cor:convolution-independent}]
By independence and the existence of densities, $(X_i,S_{-i})$ has joint density
$f_{X_i}(x)f_{S_{-i}}(y)$ on $(0,\infty)^2$. For $s>0$, we have
$$f_S(s)= \int_0^s f_{X_i}(x)f_{S_{-i}}(s-x)\d x= (f_{X_i}*f_{S_{-i}})(s).$$
Since $\nu_i(B)=\E[X_i\id{S\in B}]$ has density $\xi_i$, we have
$$\nu_i(B) =\iint_{(0,\infty)^2} x\id{x+y\in B} f_{X_i}(x)f_{S_{-i}}(y)\d x\d y =\int_B\left(\int_0^s x f_{X_i}(x)f_{S_{-i}}(s-x)\d x\right)\d s.$$
Hence $\nu_i(\d s)=\xi_i(s)\d s$ with
$$\xi_i(s)=\int_0^s x f_{X_i}(x)f_{S_{-i}}(s-x)\d x =\left(x f_{X_i}(x)\right)*f_{S_{-i}}(s),$$
as claimed.
\end{proof}

\begin{proof}[Proof of Lemma \ref{lem:absolute-continuity}]
		The relations $\nu_i\ll \mu_S$ and $\nu\ll\mu_S$ follow as in Theorem \ref{thm:main-theorem}(ii). For $\nu_i\ll\nu$, note that if $\nu(B)=0$ then $S\id{S\in B}=0$ almost surely, and since $0\le X_i\le S$ we also have $X_i\id{S\in B}=0$ almost surely, and hence $\nu_i(B)=0$. For the chain rule, we have $\nu=\sum_{j=1}^n \nu_j$ and, by Theorem \ref{thm:main-theorem}(ii), 
	$\frac{\d\nu_j}{\d\mu_S}(s)=\E[X_j\mid S=s]$ for $\mu_S$-almost every $s$.
	Therefore $\frac{\d\nu}{\d\mu_S}(s)=\sum_j \E[X_j\mid S=s]=\E[S\mid S=s]=s$ for $\mu_S$-almost every $s>0$.
Since $\nu_i\ll\nu\ll\mu_S$, the Radon--Nikodym chain rule (see, for instance, Section 32 of \cite{billingsley1995probability}) yields \eqref{eq:rn-chain}.
\end{proof}

\begin{proof}[Proof of Proposition \ref{prop:frailty-Lstar}]
We first verify the joint measurability assumption stated in Section~\ref{subsec:frailty-framework}. For each fixed $t\ge 0$, the map $\theta\mapsto \mathcal L_{X_i\mid\Theta=\theta}(t)$ is Borel measurable whenever the conditional distributions arise from a regular conditional distribution of $(X_1,\dots,X_n)$ given $\Theta$. Since $t\mapsto \mathcal L_{X_i\mid\Theta=\theta}(t)$ is continuous for every $\theta$ by dominated convergence (using $e^{-tX_i}\le 1$), the map $(\theta,t)\mapsto \mathcal L_{X_i\mid\Theta=\theta}(t)$ is jointly measurable. Moreover, the uniform bound $|\mathcal L_{X_i\mid\Theta=\theta}(t)|\le 1$ provides a $\mu_\Theta$-integrable dominating function, so finite products of conditional LSTs may be interchanged with the outer expectation $\E_\Theta[\cdot]$ by dominated convergence.

Note that $0\le X_i e^{-tS}\le 1/(et)$ ensures that $\E[X_i e^{-tS}]<\infty$ for each $t>0$. By the tower property, we have $L_i^*(t)=\mathbb{E}\left[\mathbb{E}[X_i e^{-tS}\mid \Theta]\right].$ Conditional independence on $\Theta$ yields
$$\mathbb{E}[X_i e^{-tS}\mid \Theta] = \mathbb{E}[X_i e^{-tX_i}\mid \Theta] \prod_{j\ne i}\mathbb{E}[e^{-tX_j}\mid \Theta],$$
and taking outer expectations proves the claim.
\end{proof}

\begin{proof}[Proof of Proposition \ref{prop:edf-frailty-LST}]
The identity \eqref{eq:edf-frailty-sum} follows by inserting \eqref{eq:edf-LST} into \eqref{eq:frailty-sum-LST}.
For \eqref{eq:edf-frailty-Li}, start from Proposition \ref{prop:frailty-Lstar} and substitute \eqref{eq:edf-LST} for each conditional LST. Using \eqref{eq:edf-derivative}, we have
$\E[X_i e^{-tX_i}\mid \Theta=\theta]=-\partial_t\mathcal L_{X_i\mid\Theta=\theta}(t)=\kappa_i'(\eta_i(\theta)-\phi_i t)\mathcal L_{X_i\mid\Theta=\theta}(t)$,
which yields \eqref{eq:edf-frailty-Li}.
\end{proof}

\begin{proof}[Proof of Proposition \ref{thm:mex-arch}]
  Write $r_i:=\theta/\lambda_i$. Conditional on $\Theta=\theta>0$, we have 
$\mathcal L_{X_i\mid\Theta=\theta}(t)=r_i/(r_i+t)$ and $\mathcal L_{S\mid\Theta=\theta}(t)=\prod_{j=1}^n\frac{r_j}{r_j+t}.$ If the $r_j$ are distinct, partial fractions yield
$$\mathcal L_{S\mid\Theta=\theta}(t)= \sum_{k=1}^n \frac{A_k(\bm r)r_k}{r_k+t},\qquad A_k(\bm r):=\prod_{m\neq k}\frac{r_m}{r_m-r_k},$$
so the conditional density is
$$f_{S\mid\Theta=\theta}(s)=\sum_{k=1}^n A_k(\bm r)r_k e^{-r_k s},\qquad s>0.$$

By the derivative identity evaluated at $t_1=\cdots=t_n$ applied under the conditional law,
\begin{align*}
\mathcal L_{i\mid\Theta=\theta}^*(t)
&:=\E[X_i e^{-tS}\mid\Theta=\theta]
 = -\left.\frac{\partial}{\partial t_i}\mathcal L_{\bm X\mid\Theta=\theta}(t_1,\dots,t_n)\right|_{t_1=\cdots=t_n=t}\\
&= \frac{r_i}{(r_i+t)^2}\prod_{j\neq i}\frac{r_j}{r_j+t}
 = \frac{\mathcal L_{S\mid\Theta=\theta}(t)}{r_i+t}.
\end{align*}
Thus $\mathcal L_{i\mid\Theta=\theta}^*(t)=\mathcal L\{\xi_{i\mid\Theta=\theta}\}(t)$ with
$$\xi_{i\mid\Theta=\theta}(s)=\int_0^s f_{S\mid\Theta=\theta}(u)e^{-r_i(s-u)}\d u.$$
Inserting the mixture representation of $f_{S\mid\Theta=\theta}$ and evaluating the elementary integrals yields
\begin{align*}
\xi_{i\mid\Theta=\theta}(s)
&= A_i(\bm r)r_i s e^{-r_i s}
 + \sum_{k\neq i}A_k(\bm r)r_k
   e^{-r_i s}\frac{1-e^{-(r_k-r_i)s}}{r_k-r_i}.
\end{align*}

		Next we derive the unconditional versions. Since $r_j=\Theta/\lambda_j$, we have $A_k(\bm r)=A_k(\bm\lambda)$ and
		$r_k/(r_k-r_i)=\lambda_i/(\lambda_i-\lambda_k)$. Since the coefficients $A_k(\bm\lambda)$ may have alternating signs, we apply the dominated convergence theorem to justify interchanging expectation with the finite signed sum: for each fixed $s>0$,
$$\left|\xi_{i\mid\Theta}(s)\right| \le s\sum_{k=1}^n |A_k(\bm\lambda)| \frac{\Theta}{\lambda_k}e^{-(s/\lambda_k)\Theta},$$
and since $\Theta e^{-(s/\lambda_k)\Theta}\le \lambda_k/(es)$ deterministically, the right-hand side has finite expectation.
Therefore, expectation may be interchanged with the finite signed sum. Taking expectations and using the identities $ \E[e^{-(s/\lambda)\Theta}]=\mathcal L_\Theta(s/\lambda)$ and $\E[\Theta e^{-(s/\lambda)\Theta}]= -\mathcal L_\Theta'(s/\lambda),$ we obtain the stated formulas for $f_S$ and $\xi_i$. Note that since $\nu_i(B)=\E[X_i\id{S\in B}]=\int_B \xi_i(s)\d s$ and $\mu_S(\d s)=f_S(s)\d s$, the Radon--Nikodym derivative yields $\E[X_i\mid S=s]=\xi_i(s)/f_S(s)$ for $\mu_S$-almost every $s>0$ with $f_S(s)>0$.

For the homogeneous case, let $\lambda_1=\cdots=\lambda_n=\lambda$.
Conditional on $\Theta=\theta>0$, the variables $X_1,\dots,X_n$ are exchangeable, so
$\E[X_i\mid S,\Theta=\theta]=\E[X_j\mid S,\Theta=\theta]$ for all $i,j$.
Summing over $i$ gives $S=\E[S\mid S,\Theta=\theta]=n \E[X_i\mid S,\Theta=\theta]$, hence
$\E[X_i\mid S,\Theta=\theta]=S/n$ almost surely.
Therefore $\E[X_i\mid S]=S/n$ almost surely, that is $\E[X_i\mid S=s]=s/n$ for $\mu_S$-almost every $s>0$. If $S$ has a density $f_S$, then $\xi_i(s)=(s/n)f_S(s)$ for Lebesgue-almost every $s>0$.
\end{proof}

\begin{proof}[Proof of Proposition \ref{prop:edf-frailty-katz}]
Conditioning on $M_i$ gives $\mathcal L_{X_i}(t)=P_{M_i}(\phi_i(t))$, and independence yields $\mathcal L_S(t)=\prod_{j=1}^n P_{M_j}(\phi_j(t))$. By independence and identity
$$\E[X_i e^{-tX_i}]=-\frac{d}{dt}P_{M_i}(\phi_i(t))=P_{M_i}'(\phi_i(t))\,\phi_i^{(1)}(t),$$
we obtain $\mathcal L_i^*(t)=\phi_i^{(1)}(t)\,P_{M_i}'(\phi_i(t))\prod_{j\ne i}P_{M_j}(\phi_j(t))$. Applying \eqref{eq:katz-pgf-derivative} with $z=\phi_i(t)$ gives \eqref{eq:Li-star-katz}.
\end{proof}

\begin{proof}[Proof of Lemma \ref{lem:complex-L}]
Fix $\gamma>0$ and let $K\subset\{z\in\mathbb C:\Re z\ge \gamma\}$ be compact. For all $z\in K$, $|e^{-zS}|=e^{-(\Re z)S}\le e^{-\gamma S}\le 1$, so $\mathcal L_S(z)$ is well-defined, and for each $\omega\in\Omega$ the map $z\mapsto e^{-zS(\omega)}$ is entire.

To justify differentiation under $\E[\cdot]$, note that for any integer $m\ge 0$ and $z\in K$,
$$\big|S^m e^{-zS}\big|\le S^m e^{-\gamma S}\le C_{m,\gamma},\qquad C_{m,\gamma}:=\sup_{x\ge 0} x^m e^{-\gamma x}<\infty,$$
where $C_{m,\gamma}=(m/(e\gamma))^m$ for $m\ge 1$ and $C_{0,\gamma}=1$. The bound is deterministic and does not depend on $z\in K$. Dominated convergence applied to the complex difference quotients, therefore, yields that $\mathcal L_S$ is holomorphic on $\{\Re z>0\}$ with
$$\frac{\d^m}{\d z^m}\mathcal L_S(z)=\E\big[(-S)^m e^{-zS}\big],\qquad \Re z\ge \gamma.$$

For $\mathcal L_i^*$, the bound $0\le X_i\le S$ gives $|X_i S^m e^{-zS}|\le S^{m+1} e^{-\gamma S}\le C_{m+1,\gamma}<\infty$ for $z\in K$, so the same argument yields holomorphy and
$$\frac{\d^m}{\d z^m}\mathcal L_i^*(z)=\E\big[(-S)^m X_i e^{-zS}\big],\qquad \Re z\ge \gamma.$$

For part (iii), fix a compact $K'\subset\{(t_1,\dots,t_n):\Re t_j\ge \gamma\ \forall j\}$. For $t\in K'$,
$$\left|X_i \exp \Big(-\sum_{j=1}^n t_j X_j\Big)\right|\le X_i e^{-(\Re t_i)X_i}\le \sup_{x\ge 0} x e^{-\gamma x}=\frac{1}{e\gamma},$$
so the dominated convergence theorem gives
$$\frac{\partial}{\partial t_i}\mathcal L_{\bm X}(t_1,\dots,t_n)=-\E \left[X_i \exp \Big(-\sum_{j=1}^n t_j X_j\Big)\right].$$
Evaluating at $t_1=\cdots=t_n=z$ yields $-\left.\frac{\partial}{\partial t_i}\mathcal L_{\bm X}(t_1,\dots,t_n)\right|_{t_1=\cdots=t_n=z}=\E\big[X_i e^{-zS}\big]=\mathcal L_i^*(z)$.
\end{proof}

\bibliography{ref}
\bibliographystyle{apalike}

\end{document}